\documentclass{amsart}
\usepackage{amsmath}
\usepackage{amssymb}
\usepackage{amsthm}
\usepackage{hyperref}

% \usepackage{comment}
% \usepackage{lineno}
% \pagewiselinenumbers
% \usepackage{showkeys}
% % \usepackage{todonotes}
% \newcommand{\todo}[1]{\marginpar{TODO: #1}}
% \usepackage[colorlinks]{hyperref}

% \graphicspath{{fig/}}

\newtheorem{theorem}{Theorem}[section]
\newtheorem{lemma}[theorem]{Lemma}
\newtheorem{proposition}[theorem]{Proposition}
\newtheorem{corollary}[theorem]{Corollary}

\theoremstyle{definition}
\newtheorem{definition}[theorem]{Definition}

\theoremstyle{remark}
\newtheorem{remark}[theorem]{Remark}

\numberwithin{equation}{section}

\DeclareMathOperator{\Lip}{\mathrm{Lip}}

\begin{document}

\title[Hamilton-Jacobi equations on metric spaces]{Stability properties and large time behavior of viscosity solutions of Hamilton-Jacobi equations on metric spaces}

\author{Atsushi~Nakayasu}
\address{Graduate School of Mathematical Sciences, The University of Tokyo,
3-8-1 Komaba, Meguro, Tokyo, 153-8914 Japan}
\curraddr{}
\email{ankys@ms.u-tokyo.ac.jp}
\thanks{}

\author{Tokinaga~Namba}
\address{Graduate School of Mathematical Sciences, The University of Tokyo,
3-8-1 Komaba, Meguro, Tokyo, 153-8914 Japan}
\curraddr{}
\email{namba@ms.u-tokyo.ac.jp}
\thanks{}

% \subjclass[2010]{Primary ; Secondary }

\keywords{}

% \date{}
\date{\today}

%\dedicatory{}

%\commby{}

\begin{abstract}
We investigate asymptotic behaviors of a metric viscosity solution of a Hamilton-Jacobi equation defined on a general metric space in Gangbo-\'{S}wi\c{e}ch sense.
Our results include general stability and large time behavior of the solution.
\end{abstract}

\maketitle

% % \listoftodos for amsart
% \makeatletter
% \providecommand\@dotsep{5}
% \makeatother
% \listoftodos\relax

\section{Introduction}
\label{s:infto}

In this chapter we study stability of a solution of Hamilton-Jacobi equations on a general complete geodesic metric space space $(X, d)$.
Let $H$ be a continuous function on $X\times\mathbf{R}_+$ called a \emph{Hamiltonian}.
Consider a Cauchy problem of a Hamilton-Jacobi equation of the form
\begin{align}
\label{e:hj}
\partial_t u+H(x, |\mathit{D}u|) = 0 &\quad \text{in $(0, \infty)\times X$,} \\
\label{e:hj}
u|_{t = 0} = u_0 &
\end{align}
with initial data $u_0$
and a corresponding stationary equation of the form
\begin{equation}
\label{e:lhj}
H(x, |\mathit{D}v|) = c \quad \text{in $X$}
\end{equation}
with some $c \in \mathbf{R}$.

The theory of Hamilton-Jacobi equations on generalized spaces have been developing in these years.
For example, \cite{SC11} and \cite{CCM15} study a stationary equations on topological networks and post-critically finite fractals including the Sierpinski gasket.
In order to cover them a metric viscosity solution is posed,
which means a theory of viscosity solutions on a general metric space.
A notion of metric viscosity solution was first introduced by Giga-Hamamuki-Nakayasu \cite{GHN14} to the stationary equation \eqref{e:lhj} in a spirit of \cite{SC11}.
It was attempted to apply this idea to the evolutionary equation \eqref{e:hj} in \cite{N14}.
Afterwards some different notions of metric viscosity solutions were proposed by several authors;
see, e.g., \cite{AF14}, \cite{GS14} \cite{GS14b}.
In particular, the metric viscosity solution by Gangbo-\'{S}wi\c{e}ch \cite{GS14}, \cite{GS14b} is apparently compatible with stability argument.
In fact, the authors of \cite{GS14} construct a solution of \eqref{e:hj} by Perron method
while the other materials show a representation formula of a metric viscosity solution.

The main aim of the present work is to establish a general stability result for the Gangbo-\'{S}wi\c{e}ch solutions.
Roughly speaking, the stability is the proposition claiming that the semilimit of a family of viscosity solutions is a viscosity solution;
see \cite[Theorem A.2]{BP87}.
At least in the classical theory of viscosity solutions the stability is a fundamental property to derive some asymptotic behavior of the solution.
Large time asymptotics of a viscosity solution of Hamilton-Jacobi equations is studied by Namah-Roquejoffre \cite{NR99} and Fathi \cite{F98} independently.
Another aim of this chapter is to establish, as a consequence of the stability, a large time asymptotic behavior of the solution on a singular space such as the Sierpinski gasket.
Based on the argument in \cite{NR99} we will show that the solution $u(t, x)+ct$ of \eqref{e:hj} goes to a function $v$ as $t \to \infty$ and $v$ is a solution of the stationary problem \eqref{e:lhj}
with some constant $c \in \mathbf{R}$.

We restrict ourselves to the case when the metric space $X$ is compact
but the Sierpinski gasket can be handled.
Let us extend $H$ to $X\times\mathbf{R}$ as an even function $H(x, p) = H(x, |p|)$.
The basic assumptions on the Hamiltonian are:
\begin{itemize}
\item[(A1)]
$H$ is continuous.
\item[(A2)]
$H$ is convex in the second variable.
\item[(A3)]
$H$ is coercive in the sense of
$$
\lim_{p \to \infty} \inf_x H(x, p) = \infty.
$$
\item[(A4)]
$\sup_x H(x, 0) < \infty$.
\end{itemize}
Set $c := \sup_x H(x, 0)$;
otherwise the stationary equation \eqref{e:lhj} has no solution.
We first show that the stationary equation has at least one solution.
Then, a standard barrier method implies that there exist upper and lower semilimits $\overline{u}(x)$ and $\underline{u}(x)$ of $u(t, x)+c t$ as $t \to \infty$ as real-valued functions.
As a result the stability argument yields that $\overline{u}$ and $\underline{u}$ are a subsolution and a supersolution of the limit equation.
Next note that for each $x \in A := \{ x \in X \mid H(x, 0) = \sup_x H(x, 0) \}$ the solution $u(t, x)+c t$ is non-increasing since $u_t+c \leq 0$
and so $\overline{u} = \underline{u}$ on $A$ by Dini's theorem.
We see that $\overline{u} = \underline{u}$ by a comparison principle for the stationary equation.
This means that $u(t, x)+c t$ converges to a solution $\overline{u} = \underline{u}$ of the stationary equation locally uniformly.

In order to justify this argument we will establish solvability of \eqref{e:lhj} and a comparison principle for Gangbo-\'{S}wi\c{e}ch solutions of \eqref{e:lhj},
which are new.
The authors of \cite{NR99} invoke a result by Lions-Papanicolaou-Varadhan \cite{LPV87}.
The argument is based on the ergodic theory
but in this work we will follow a direct approach via Perron method by Fathi-Siconolfi \cite{FS05}.

\section{Definition of Gangbo-\'{S}wi\c{e}ch solutions}
\label{s:def}

In this section we review the definition of metric viscosity solutions proposed by Gangbo and \'{S}wi\c{e}ch;
see \cite{GS14} and \cite{GS14b}.
Let $(X, d)$ be a complete geodesic metric space.

For a real-valued function $u$ on an open subset $Q$ of the spacetime $\mathbf{R}\times X$
define the \emph{upper local slope} and \emph{lower local slope}
\begin{align*}
|\nabla^+ u|(t, x) &:= \limsup_{y \to x} \frac{[u(t, y)-u(t, x)]_+}{d(y, x)}, \\
\quad |\nabla^- u|(t, x) &:= \limsup_{y \to x} \frac{[u(t, y)-u(t, x)]_-}{d(y, x)}
\end{align*}
and the \emph{local slope}
$$
|\nabla u|(t, x) := \limsup_{y \to x} \frac{|u(t, y)-u(t, x)|}{d(y, x)}.
$$
It is easy to see that $|\nabla^- u| = |\nabla^+ (-u)|$.

We next introduce smoothness classes for functions on a metric space.

\begin{definition}
We denote by $\mathcal{C}(Q)$ the set of all functions $u$ on $Q$ such that $u$ is locally Lipschitz continuous on $Q$ and $\partial_t u$ is continuous on $Q$.
We also set
$$
\begin{aligned}
\overline{\mathcal{C}}^1(Q) &:= \{ u \in \mathcal{C}(Q) \mid \text{$|\nabla^+ u| = |\nabla u|$ and they are continuous} \}, \\
\underline{\mathcal{C}}^1(Q) &:= \{ u \in \mathcal{C}(Q) \mid \text{$|\nabla^- u| = |\nabla u|$ and they are continuous} \}. \\
\end{aligned}
$$
\end{definition}

\begin{lemma}
\label{t:c1ex}
Let $u(t, x) := a(t)\phi(d(x, y)^2)+b(t)$ with $y \in X$, $\phi \in C^1(\mathbf{R}_+)$, $\phi' \geq 0$, $a, b \in C^1(\mathbf{R})$.
Then, $u \in \underline{\mathcal{C}}^1(\mathbf{R}\times X)$ and moreover
$$
|\nabla^- u|(t, x) = |\nabla u|(t, x) = 2a(t)\phi'(d(x, y)^2)d(x, y).
$$
\end{lemma}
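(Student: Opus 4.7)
\emph{Plan.} The goal breaks into three pieces: (i) verifying $u \in \mathcal{C}(\mathbf{R}\times X)$; (ii) establishing the slope formula; (iii) showing $|\nabla u|$ and $|\nabla^- u|$ are continuous. The main obstacle I expect is the lower bound in (ii), since only for this do we genuinely need the geodesic structure of $X$.

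For (i), I compute $\partial_t u(t,x) = a'(t)\phi(d(x,y)^2) + b'(t)$, which is continuous by composition, and local Lipschitz continuity on $\mathbf{R}\times X$ follows from the $1$-Lipschitz property of $d(\cdot, y)$ together with the $C^1$ regularity of $\phi$, $a$, and $b$, localized to bounded subsets of $\mathbf{R}\times X$.

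For (ii), fix $(t,x)$, set $r := d(x,y)$, and for $z$ close to $x$ set $s := d(z,y)$. The easy half is the upper estimate
$$
|\nabla u|(t,x) \leq 2|a(t)|\,\phi'(r^2)\, r,
$$
which I obtain by writing $\phi(s^2) - \phi(r^2) = \phi'(\xi)(s+r)(s-r)$ via the mean value theorem, using the reverse triangle inequality $|s-r| \leq d(z,x)$, and sending $z \to x$ (so $s \to r$ and $\xi \to r^2$). For the matching lower bound on $|\nabla^- u|$ I invoke the geodesic hypothesis: when $r > 0$, pick a geodesic $\gamma\colon [0, r] \to X$ from $y$ to $x$ and set $z_\varepsilon := \gamma(r - \varepsilon)$, so that $d(z_\varepsilon, y) = r - \varepsilon$ and $d(z_\varepsilon, x) = \varepsilon$. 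Under the sign convention that makes the claimed formula non-negative (namely $a(t) \geq 0$), the difference $u(t, z_\varepsilon) - u(t, x) = a(t)[\phi((r-\varepsilon)^2) - \phi(r^2)]$ is $\leq 0$, whence
$$
\frac{[u(t, z_\varepsilon) - u(t,x)]_-}{\varepsilon} = a(t) \cdot \frac{\phi(r^2) - \phi((r-\varepsilon)^2)}{\varepsilon} \longrightarrow 2 a(t)\,\phi'(r^2)\, r \quad \text{as } \varepsilon \to 0^+.
$$
This forces $|\nabla^- u|(t,x) \geq 2 a(t)\,\phi'(r^2)\, r$, and combining with the trivial bound $|\nabla^- u| \leq |\nabla u|$ and the upper estimate above collapses the chain to equality. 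The degenerate case $r = 0$ is handled separately: Taylor expanding gives $\phi(d(z,y)^2) - \phi(0) = O(d(z,y)^2)$, so all three slopes vanish at $y$, again matching the formula.

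Part (iii) is immediate, since $2 a(t)\,\phi'(d(x,y)^2)\, d(x,y)$ is manifestly continuous in $(t,x)$. The crux of the argument is the geodesic construction used in the lower bound: in a merely metric (non-geodesic) space there need not exist sequences $z_n \to x$ realizing $d(z_n, y) + d(z_n, x) = d(x,y)$, and without such sequences one cannot push $|\nabla^- u|$ up to the value produced by the upper bound.
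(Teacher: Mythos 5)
Your argument is correct and is essentially the standard proof that the paper defers to \cite{AF14} and \cite{GS14}: a mean-value upper bound for $|\nabla u|$ via the $1$-Lipschitz property of $d(\cdot,y)$, a geodesic from $y$ to $x$ to saturate $|\nabla^- u|$ from below (this is exactly where the geodesic hypothesis on $X$ enters), and a separate treatment of the degenerate case $x=y$. Your observation that $a(t)\ge 0$ is needed for the stated formula to be consistent is also apt — the right-hand side would otherwise be negative while the slopes are non-negative by definition, so this hypothesis (present in the cited sources, and satisfied in every application the paper makes of the lemma) is implicitly assumed.
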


See \cite{AF14} or \cite{GS14} for the proof.

We consider a Hamilton-Jacobi equation of the form
\begin{equation}
\label{e:ghj}
F(z, |\mathit{D}u|, \partial_t u) = 0 \quad \text{in $Q$.}
\end{equation}
Here, $z = (t, x)$, and $F = F(z, p, q) \in \mathit{C}(Q\times\mathbf{R}\times\mathbf{R})$ is even and convex in $p$ and strictly increasing in $q$.
Set
$$
F_r(z, p, q) :=
\begin{cases}
\sup_{p' \in B}F(z, p+r p', q) & \text{if $r \ge 0$} \\
\inf_{p' \in B}F(z, p+r p', q) & \text{if $r \le 0$} \\
\end{cases}
$$
for $r \in \mathbf{R}$,
where $B := [-1, 1]$.
Note that $(z, p, q, r) \mapsto F_r(z, p, q)$ is continuous
since $(z, p, q, r, p') \mapsto F(z, p+r p', q)$ is continuous and $B$ is compact.
Also, it is easy to check $r \mapsto F_r(z, p, q)$ is non-decreasing.

For a function $u$ defined on $Q$ with values in the extended real numbers $\bar{\mathbf{R}} := \mathbf{R}\cup\{ \pm\infty \}$,
we take its upper and lower semicontinuous envelope $u^*$ and $u_*$.

\begin{definition}[Metric viscosity solutions of \eqref{e:ghj}]
Let $u$ be an $\bar{\mathbf{R}}$-valued function on $Q$.

We say that $u$ is a \emph{metric viscosity subsolution} (resp.\ \emph{supersolution}) of \eqref{e:ghj}
when for every $\psi = \psi_1+\psi_2$ with $\psi_1 \in \underline{\mathcal{C}}^1(Q)$ (resp.\ $\psi_1 \in \overline{\mathcal{C}}^1(Q)$) and $\psi_2 \in \mathcal{C}(Q)$,
if $u^*-\psi$ (resp.\ $u_*-\psi$) attains a zero local maximum (resp.\ minimum) at a point $z = (t, x) \in Q$,
i.e.\ $(u^*-\psi)(z) = \max_{B_R(z)}(u^*-\psi) = 0$ (resp.\ $(u_*-\psi)(z) = \min_{B_R(z)}(u_*-\psi) = 0$) for some $R > 0$,
then
$$
\text{
$F_{-|\nabla \psi_2|^*(z)}(z, |\nabla \psi_1|(z), \partial_t\psi(z)) \le 0$
(resp.\ $F_{|\nabla \psi_2|^*(z)}(z, |\nabla \psi_1|(z), \partial_t\psi(z)) \ge 0$.)}
$$

We say that $u$ is a \emph{metric viscosity solution} of \eqref{e:ghj}
if $u$ is both a metric viscosity subsolution and a metric viscosity supersolution of \eqref{e:ghj}.
\end{definition}

By a similar way we also define a notion of metric viscosity solutions for a stationary equation of the form
\begin{equation}
\label{e:gshj}
H(x, |\mathit{D}v|) = 0 \quad \text{in $U$}
\end{equation}
with $U \subset X$ open.
Here, $H = H(x, p) \in \mathit{C}(U\times\mathbf{R})$ is even and convex in $p$.
Note that one is able to define the local slopes $|\nabla^- v|$, $|\nabla^+ v|$, $|\nabla v|$ and smoothness $\mathcal{C}(U)$, $\overline{\mathcal{C}}^1(U)$, $\underline{\mathcal{C}}^1(U)$ for a function $v$ on $U$.

\begin{definition}[Metric viscosity solutions of \eqref{e:gshj}]
Let $v$ be an $\bar{\mathbf{R}}$-valued function on $U$.

We say that $v$ is a \emph{metric viscosity subsolution} (resp.\ \emph{supersolution}) of \eqref{e:gshj}
when for every $\psi = \psi_1+\psi_2$ with $\psi_1 \in \underline{\mathcal{C}}^1(U)$ (resp.\ $\psi_1 \in \overline{\mathcal{C}}^1(U)$) and $\psi_2 \in \mathcal{C}(Q)$,
if $v^*-\psi$ (resp.\ $v_*-\psi$) attains a zero local maximum (resp.\ minimum) at a point $x \in U$,
then
$$
\text{
$H_{-|\nabla \psi_2|^*(x)}(x, |\nabla \psi_1|(x)) \le 0$
(resp.\ $H_{|\nabla \psi_2|^*(x)}(x, |\nabla \psi_1|(x)) \ge 0$.)}
$$

We say that $v$ is a \emph{metric viscosity solution} of \eqref{e:gshj}
if $v$ is both a metric viscosity subsolution and a metric viscosity supersolution of \eqref{e:gshj}.
\end{definition}

These notions satisfies the following natural propositions.

\begin{proposition}[Consistency]
\label{t:gshj}
If $u$ is a metric viscosity subsolution of \eqref{e:ghj} in $I\times U$ with an open interval $I$ and $u$ is of the form $u(t, x) = v(x)$,
then $v$ is a metric viscosity subsolution of \eqref{e:gshj} in $U$ with $H(x, p) := F(0, x, p, 0)$.

Conversely, if $v$ is a metric viscosity subsolution of \eqref{e:gshj} in $U$,
then $u(t, x) := v(x)$ is a metric viscosity subsolution of \eqref{e:ghj} in $\mathbf{R}\times U$ with $F(t, x, p, a) := H(x, p)$.
\end{proposition}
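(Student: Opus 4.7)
The plan is to prove both implications by the natural lifting and restriction of test functions. The key observation is that the slopes $|\nabla^\pm\cdot|$ and $|\nabla\cdot|$ on a spacetime domain are defined by limits taken only in the spatial variable at fixed time. Consequently, lifting a spatial function to be constant in $t$ and restricting a spacetime function to a time slice both transform the slopes in the obvious way, and the smoothness classes $\underline{\mathcal{C}}^1$, $\overline{\mathcal{C}}^1$, $\mathcal{C}$ are preserved in the expected manner.

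For the first implication, I would take a test pair $\psi = \psi_1 + \psi_2$ for $v$ at $x_0 \in U$ and lift it to $\tilde\psi_i(t,x) := \psi_i(x)$. The verifications that $\tilde\psi_1 \in \underline{\mathcal{C}}^1(I\times U)$, $\tilde\psi_2 \in \mathcal{C}(I\times U)$, that $\partial_t\tilde\psi \equiv 0$, and that $|\nabla\tilde\psi_j|(t_0,x_0) = |\nabla\psi_j|(x_0)$ and $|\nabla\tilde\psi_2|^*(t_0,x_0) = |\nabla\psi_2|^*(x_0)$, all reduce to the observation above. Since $u^* - \tilde\psi$ agrees with $v^* - \psi$ on every time slice, the zero local maximum at $x_0$ lifts to a zero local maximum of $u^* - \tilde\psi$ at $(t_0,x_0)$ for every $t_0 \in I$. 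The subsolution property of $u$ at that point then yields
\[
F_{-|\nabla\psi_2|^*(x_0)}\bigl((t_0,x_0),\,|\nabla\psi_1|(x_0),\,0\bigr) \le 0,
\]
and evaluating at $t_0 = 0$ recovers exactly the desired inequality $H_{-|\nabla\psi_2|^*(x_0)}(x_0, |\nabla\psi_1|(x_0)) \le 0$.

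For the converse, I would take a test pair $\psi = \psi_1 + \psi_2$ at $(t_0,x_0)$ for $u$ and restrict it to $\phi_i(x) := \psi_i(t_0,x)$. Using the same slope observation together with the fact that restrictions of locally Lipschitz functions are locally Lipschitz, I would verify $\phi_1 \in \underline{\mathcal{C}}^1(U)$, $\phi_2 \in \mathcal{C}(U)$, and $|\nabla\phi_i|(x_0) = |\nabla\psi_i|(t_0,x_0)$. One asymmetric point is worth noting: the spatial envelope satisfies only $|\nabla\phi_2|^*(x_0) \le |\nabla\psi_2|^*(t_0,x_0)$, because the spacetime $\limsup$ is taken over a strictly larger net; this is harmless because $r \mapsto F_r$ is non-decreasing. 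Restricting the spacetime inequality $u^* \le \psi$ valid near $(t_0,x_0)$ to the slice $t = t_0$, and using $u^*(t_0,x) = v^*(x)$, gives a zero local maximum of $v^* - \phi$ at $x_0$. Applying the stationary subsolution inequality for $v$ and the identification $F \equiv H$ (independent of the $a$-slot), together with monotonicity of $F_r$ to absorb the envelope inequality, produces exactly the required spacetime subsolution inequality.

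The main (and essentially only) technical content is the two bookkeeping checks for the smoothness classes, both of which reduce to the purely spatial character of the local slopes; everything else is formal. I would also flag a minor slip in the statement of the first implication: as written, the definition $H(x,p) := F(0,x,p,0)$ implicitly assumes either $0 \in I$ or that $F$ is independent of $t$, the latter being the case for the Hamiltonians actually considered in this paper.
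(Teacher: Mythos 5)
Your proof is correct; the paper in fact omits the proof of this proposition entirely ("the proofs are straightforward so we omit them"), and your lifting/restriction of test functions, with the key observation that the local slopes are purely spatial and the careful one-sided treatment of $|\nabla\phi_2|^*(x_0) \le |\nabla\psi_2|^*(t_0,x_0)$ absorbed by monotonicity of $r \mapsto F_r$, is exactly the intended argument. Your remark that the statement implicitly needs $0 \in I$ (or $t$-independence of $F$) is also a fair catch.
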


\begin{proposition}[Transitive relation]
\label{t:trans}
Assume that $F = F(z, p, q)$, $G = G(z, p, q)$ satisfy $G \leq F$
and let $u$ be a metric viscosity subsolution of \eqref{e:ghj}.
Then, $u$ is a metric viscosity subsolution of $G(z, |\mathit{D}u|, \partial_t u) = 0$ in $Q$.
\end{proposition}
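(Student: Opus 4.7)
The plan is to unwind the definition and observe that the subsolution inequality involves only the modified Hamiltonian $F_r$ at a non-positive parameter $r$, where the monotonicity $G\le F$ passes directly to $G_r\le F_r$.

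Concretely, I would fix an arbitrary admissible test function $\psi=\psi_1+\psi_2$ with $\psi_1\in\underline{\mathcal{C}}^1(Q)$ and $\psi_2\in\mathcal{C}(Q)$, and a point $z\in Q$ at which $u^*-\psi$ attains a zero local maximum. Since $u$ is a metric viscosity subsolution of \eqref{e:ghj}, the definition yields
\[
F_{-|\nabla\psi_2|^*(z)}\bigl(z,|\nabla\psi_1|(z),\partial_t\psi(z)\bigr)\le 0.
\]
The class of admissible test functions $\psi=\psi_1+\psi_2$ does not depend on the Hamiltonian, so verifying that $u$ is a subsolution of $G(z,|Du|,\partial_t u)=0$ reduces to replacing $F$ by $G$ in the displayed inequality.

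The key observation is the following elementary monotonicity of the $F_r$ construction. Because here $r:=-|\nabla\psi_2|^*(z)\le 0$, the definition of $F_r$ gives
\[
F_r(z,p,q)=\inf_{p'\in B}F(z,p+rp',q),\qquad G_r(z,p,q)=\inf_{p'\in B}G(z,p+rp',q).
\]
Since $G\le F$ pointwise on $Q\times\mathbf{R}\times\mathbf{R}$, taking infimum over $p'\in B$ yields $G_r\le F_r$. (The identical argument with $\sup$ would handle the $r\ge0$ case, which is irrelevant here but would be needed for the analogous statement for supersolutions.)

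Combining these two facts,
\[
G_{-|\nabla\psi_2|^*(z)}\bigl(z,|\nabla\psi_1|(z),\partial_t\psi(z)\bigr)\le F_{-|\nabla\psi_2|^*(z)}\bigl(z,|\nabla\psi_1|(z),\partial_t\psi(z)\bigr)\le 0,
\]
which is exactly the subsolution condition for $G(z,|Du|,\partial_t u)=0$ at $z$ with test function $\psi$. Since $\psi$ and $z$ were arbitrary, $u$ is a metric viscosity subsolution of $G(z,|Du|,\partial_t u)=0$ in $Q$. There is no real obstacle in this argument; the only thing that needs a moment's attention is to check that the sign of $r$ is non-positive in the subsolution regime so that $F_r$ is defined via an infimum, allowing the pointwise inequality $G\le F$ to be preserved under the modification.
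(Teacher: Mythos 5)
Your proof is correct and is precisely the straightforward unwinding of the definition that the paper has in mind when it omits the proof: the test-function class is independent of the Hamiltonian, and $G\le F$ passes to $G_r\le F_r$ for the relevant parameter $r=-|\nabla\psi_2|^*(z)\le 0$. One small remark: the pointwise inequality $G\le F$ is preserved by taking either $\inf$ or $\sup$ over $p'\in B$, so $G_r\le F_r$ actually holds for every $r$, and the sign of $r$ plays no essential role here.
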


\begin{proposition}[Locality]
\label{t:loc}
Let $Q_1$ and $Q_2$ be two open subsets of $(0, \infty)\times X$.
If $u$ is a metric viscosity subsolution of \eqref{e:ghj} in $Q_1$ and is a metric viscosity subsolution of \eqref{e:ghj} in $Q_2$,
then $u$ is a metric viscosity subsolution of \eqref{e:ghj} in $Q = Q_1\cup Q_2$
\end{proposition}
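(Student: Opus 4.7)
The plan is to reduce a subsolution test at a point $z \in Q = Q_1 \cup Q_2$ to the known subsolution property on whichever of $Q_1, Q_2$ contains $z$, by restricting the test function to that open set and verifying that all of the data in the definition of subsolution (the envelope, the smoothness classes, and the local maximum) behave well under restriction to an open subset.

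Concretely, let $\psi = \psi_1 + \psi_2$ be admissible on $Q$, i.e.\ $\psi_1 \in \underline{\mathcal{C}}^1(Q)$ and $\psi_2 \in \mathcal{C}(Q)$, and suppose $u^* - \psi$ attains a zero local maximum at some $z = (t,x) \in Q$, with $R > 0$ as in the definition. Since $Q = Q_1 \cup Q_2$, pick $i \in \{1,2\}$ with $z \in Q_i$. Because $Q_i$ is open, I can shrink $R$ to some $R' > 0$ with $B_{R'}(z) \subset Q_i$, preserving both the maximum and its zero value. My first step is then to restrict: set $\psi_1' := \psi_1|_{Q_i}$ and $\psi_2' := \psi_2|_{Q_i}$.

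The second and most substantive step is to check $\psi_1' \in \underline{\mathcal{C}}^1(Q_i)$ and $\psi_2' \in \mathcal{C}(Q_i)$. This is essentially routine because every defining property is local: local Lipschitz continuity and continuity of $\partial_t$ on $Q$ restrict to the corresponding properties on the open subset $Q_i$; and the local slopes $|\nabla^- \psi_1|$ and $|\nabla \psi_1|$ are defined via $\limsup_{y \to x}$ along $y$ approaching $x$ in the metric space $X$, so their values at points of $Q_i$ depend only on $\psi_1$ restricted to an arbitrarily small neighborhood of that point, hence are unchanged by restriction to $Q_i$. In particular $|\nabla^- \psi_1'| = |\nabla \psi_1'|$ on $Q_i$ with the same continuity.

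Similarly I need to check that the envelope behaves well: for $z \in Q_i$, a neighborhood of $z$ inside $Q$ agrees with a neighborhood inside $Q_i$, so the upper semicontinuous envelope $u^*$ computed relative to $Q$ coincides at $z$ (and on a whole neighborhood of $z$ inside $Q_i$) with the envelope of $u|_{Q_i}$ computed relative to $Q_i$. Thus $u^*|_{B_{R'}(z)} - \psi|_{B_{R'}(z)}$ attains a zero maximum at $z$, viewed as a test inside $Q_i$.

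Finally, the hypothesis that $u$ is a subsolution of \eqref{e:ghj} on $Q_i$ yields
\[
F_{-|\nabla \psi_2'|^*(z)}(z, |\nabla \psi_1'|(z), \partial_t \psi(z)) \le 0,
\]
and by the locality of the slopes verified above, $|\nabla \psi_1'|(z) = |\nabla \psi_1|(z)$ and $|\nabla \psi_2'|^*(z) = |\nabla \psi_2|^*(z)$, so this is exactly the required subsolution inequality on $Q$. The only step that requires any care is the preservation of the smoothness classes $\underline{\mathcal{C}}^1$ and $\mathcal{C}$ and of the envelope under restriction to an open subset, but this is immediate from the local nature of all the defining conditions; beyond that, the argument is bookkeeping.
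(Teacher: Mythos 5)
Your proof is correct and is exactly the straightforward restriction argument the paper has in mind when it omits the proof: all the ingredients of the subsolution test (the classes $\underline{\mathcal{C}}^1$ and $\mathcal{C}$, the local slopes and their envelopes, the upper semicontinuous envelope $u^*$, and the zero local maximum after shrinking the radius so that $B_{R'}(z)\subset Q_i$) are local, so the test at $z\in Q$ reduces to a test in whichever $Q_i$ contains $z$. Nothing further is needed.
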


\begin{proposition}[Change of variable]
\label{t:cvt}
Let $\phi$ be a $C^1$ diffeomorphism from an interval $I$ to an interval $J$.
If $u$ is a metric viscosity subsolution of \eqref{e:ghj} in $I\times U$,
then $v(s, x) := u(\phi^{-1}(s), x)$ is a metric viscosity subsolution of
$$
F(x, |D v|, \phi' v_s) = 0 \quad \text{in $J\times U$.}
$$
\end{proposition}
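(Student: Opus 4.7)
The plan is to transport any test pair back from the image side $(s,x) \in J \times U$ to the domain side $(t,x) \in I \times U$ via $s = \phi(t)$, and invoke the subsolution property of $u$ with the pulled-back test function.

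Suppose $\psi = \psi_1 + \psi_2$ with $\psi_1 \in \underline{\mathcal{C}}^1(J\times U)$ and $\psi_2 \in \mathcal{C}(J\times U)$, and that $v^* - \psi$ attains a zero local maximum at $(s_0, x_0) \in J\times U$. Let $t_0 := \phi^{-1}(s_0)$ and define the pullbacks
$$
\tilde\psi_i(t, x) := \psi_i(\phi(t), x), \quad i = 1,2, \qquad \tilde\psi := \tilde\psi_1 + \tilde\psi_2.
$$
Since $\phi$ is a homeomorphism, one checks that $v^*(s,x) = u^*(\phi^{-1}(s), x)$, so $u^* - \tilde\psi$ attains a zero local maximum at $(t_0, x_0)$.

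The first key step is to verify that pullback by $\phi$ preserves the smoothness classes. Since $\phi$ acts only on the time variable whereas the slopes $|\nabla^\pm\cdot|$, $|\nabla\cdot|$ are defined by limits in $x$ with $t$ held fixed, one gets pointwise identities
$$
|\nabla^\pm \tilde\psi_i|(t, x) = |\nabla^\pm \psi_i|(\phi(t), x), \qquad |\nabla \tilde\psi_i|(t, x) = |\nabla \psi_i|(\phi(t), x).
$$
Continuity of these quantities and the equality $|\nabla^- \tilde\psi_1| = |\nabla \tilde\psi_1|$ follow immediately from the corresponding properties of $\psi_1$ together with the continuity of $\phi$. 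The chain rule gives $\partial_t \tilde\psi_i(t,x) = \phi'(t)\,\partial_s \psi_i(\phi(t), x)$, which is continuous because $\phi \in C^1$. Hence $\tilde\psi_1 \in \underline{\mathcal{C}}^1(I\times U)$ and $\tilde\psi_2 \in \mathcal{C}(I\times U)$, so $\tilde\psi$ is an admissible test function for $u$ in the sense of the definition of metric viscosity subsolution of \eqref{e:ghj}.

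Apply the subsolution property of $u$ at $(t_0, x_0)$ with the test pair $(\tilde\psi_1, \tilde\psi_2)$. Using the identities above, together with $\partial_t \tilde\psi(t_0, x_0) = \phi'(t_0)\, \partial_s\psi(s_0, x_0)$, the resulting inequality
$$
F_{-|\nabla \tilde\psi_2|^*(t_0, x_0)}\bigl(t_0, x_0, |\nabla \tilde\psi_1|(t_0, x_0), \partial_t \tilde\psi(t_0, x_0)\bigr) \le 0
$$
rewrites as
$$
F_{-|\nabla \psi_2|^*(s_0, x_0)}\bigl(\phi^{-1}(s_0), x_0, |\nabla \psi_1|(s_0, x_0),\, \phi'(\phi^{-1}(s_0))\,\partial_s \psi(s_0, x_0)\bigr) \le 0,
$$
which is exactly the subsolution condition for $v$ with respect to the transformed Hamiltonian $\tilde F(s, x, p, r) := F(\phi^{-1}(s), x, p, \phi'(\phi^{-1}(s))\,r)$.

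The only step requiring genuine care is the transfer of smoothness classes: the fact that $\tilde\psi_1 \in \underline{\mathcal{C}}^1$ exploits crucially that $\phi$ depends only on $t$, so that for each fixed $t$ the function $\tilde\psi_1(t,\cdot)$ is literally a time-slice of $\psi_1$ and the local slopes are preserved pointwise rather than merely up to some factor. Once this is established, the rest of the argument is bookkeeping via the chain rule.
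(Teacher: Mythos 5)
Your argument is correct and is exactly the ``straightforward'' proof the paper omits for Proposition \ref{t:cvt}: pull the test pair $(\psi_1,\psi_2)$ back through $(t,x)\mapsto(\phi(t),x)$, observe that the local slopes are computed at fixed time and hence are preserved pointwise (so the classes $\underline{\mathcal{C}}^1$ and $\mathcal{C}$ are preserved), and apply the chain rule to the time derivative before invoking the subsolution property of $u$. You correctly identify the one point needing care, namely that $\phi$ acts only on $t$ so the slopes and their upper semicontinuous envelopes transport without any extra factor; the rest is bookkeeping, as you say.
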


\begin{proposition}[Composition]
\label{t:cvu}
Let $a$ be a non-zero constant and $b = b(t)$ be a $C^1$ function on an interval $I$.
If $u$ is a metric viscosity subsolution of \eqref{e:ghj} in $I\times U$,
then $v(t, x) := a u(t, x)+b(t)$ is a metric viscosity subsolution of
$$
F(x, \frac{|D v|}{a}, \frac{v_t-b'(t)}{a}) = 0 \quad \text{in $I\times U$.}
$$
\end{proposition}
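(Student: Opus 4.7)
The plan is to derive the subsolution inequality for $v = au+b$ from the one for $u$ via the inverse change of variable $u = (v-b)/a$. I restrict to $a > 0$: when $a<0$ the composition exchanges $v^{*}$ with $au_{*}+b$ and sends $\underline{\mathcal{C}}^{1}$ onto $\overline{\mathcal{C}}^{1}$, so the same manipulation would naturally yield a supersolution of the transformed equation rather than a subsolution. Thus the statement is tailored to the case $a>0$, which I handle now.

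Let $\psi=\psi_{1}+\psi_{2}$ with $\psi_{1}\in\underline{\mathcal{C}}^{1}(I\times U)$ and $\psi_{2}\in\mathcal{C}(I\times U)$, and assume $v^{*}-\psi$ attains a zero local maximum at $z_{0}=(t_{0},x_{0})$. Because $a>0$ and $b(t)$ is continuous, $v^{*}=au^{*}+b$, so $u^{*}-\phi$ attains a zero local maximum at $z_{0}$ with $\phi:=(\psi-b)/a=\phi_{1}+\phi_{2}$, where $\phi_{1}:=\psi_{1}/a$ and $\phi_{2}:=(\psi_{2}-b)/a$. Since $b$ is $C^{1}$ in $t$ and constant in $x$, it belongs to $\mathcal{C}(I\times U)$, hence $\phi_{2}\in\mathcal{C}(I\times U)$. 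Multiplication by the positive constant $1/a$ scales $|\nabla^{-}\psi_{1}|$ and $|\nabla\psi_{1}|$ uniformly by $1/a$ and preserves their continuity, so $\phi_{1}\in\underline{\mathcal{C}}^{1}(I\times U)$. Applying the subsolution property of $u$ at $z_{0}$ to the test pair $(\phi_{1},\phi_{2})$ and using
$|\nabla\phi_{1}|(z_{0})=|\nabla\psi_{1}|(z_{0})/a$, $|\nabla\phi_{2}|^{*}(z_{0})=|\nabla\psi_{2}|^{*}(z_{0})/a$, and $\partial_{t}\phi(z_{0})=(\partial_{t}\psi(z_{0})-b'(t_{0}))/a$, one obtains
$$F_{-|\nabla\psi_{2}|^{*}(z_{0})/a}\!\left(z_{0},\frac{|\nabla\psi_{1}|(z_{0})}{a},\frac{\partial_{t}\psi(z_{0})-b'(t_{0})}{a}\right)\le 0.$$

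To conclude, I would unwrap the definition of $\tilde F_{r}$ for the new Hamiltonian $\tilde F(z,p,q):=F(z,p/a,(q-b'(t))/a)$. For $r\le 0$ and $a>0$ the substitution $p''=p'$ together with $\{(p+rp')/a:p'\in B\}=\{p/a+(r/a)p'':p''\in B\}$ gives
$$\tilde F_{r}(z,p,q)=\inf_{p'\in B}F\!\left(z,\tfrac{p+rp'}{a},\tfrac{q-b'(t)}{a}\right)=F_{r/a}\!\left(z,\tfrac{p}{a},\tfrac{q-b'(t)}{a}\right),$$
since $r/a\le 0$. Setting $r=-|\nabla\psi_{2}|^{*}(z_{0})\le 0$ identifies this with the inequality already established, which is exactly the subsolution condition for $v$ against the transformed equation. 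The only nontrivial work is this bookkeeping: tracking the sign of $a$ in the decomposition so that $\phi_{1}$ remains in $\underline{\mathcal{C}}^{1}$ (this genuinely fails for $a<0$), and matching the envelope $\tilde F_{r}$ with $F_{r/a}$; no new analytic tool beyond the definition of a Gangbo--\'Swi\c ech subsolution is required.
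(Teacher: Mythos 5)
Your argument is correct and is exactly the routine verification the paper has in mind when it declares these proofs straightforward and omits them: transfer the test function via $\phi_1=\psi_1/a$, $\phi_2=(\psi_2-b)/a$ (noting $b\in\mathcal{C}$ and that positive scaling preserves membership in $\underline{\mathcal{C}}^1$), and identify the envelope $\tilde F_{r}$ of the transformed Hamiltonian with $F_{r/a}$. Your restriction to $a>0$ is also well taken: for $a<0$ the transformed Hamiltonian is decreasing in $q$ and the decomposition sends $\underline{\mathcal{C}}^1$ to $\overline{\mathcal{C}}^1$, so the conclusion would have to be restated for supersolutions; since every application of this proposition in the paper uses $a=1$, nothing is lost.
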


\begin{proposition}[Strong solutions]
\label{t:ss}
Let $\psi_1 \in \overline{\mathcal{C}}^1(Q)$ and $\psi_2 \in \mathcal{C}(Q)$.
If $\psi = \psi_1+\psi_2$ satisfies
$$
F_{|\nabla\psi_2|^*(z)}(z, |\nabla\psi_1|(z), \partial_t\psi(z)) \leq 0 \quad \text{for all $z \in Q$,}
$$
then $\psi$ is a metric viscosity subsolution of \eqref{e:ghj}.
\end{proposition}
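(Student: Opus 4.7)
The plan is to let $\varphi = \varphi_1 + \varphi_2$ with $\varphi_1 \in \underline{\mathcal{C}}^1(Q)$ and $\varphi_2 \in \mathcal{C}(Q)$ be an arbitrary admissible test decomposition for the subsolution property, and suppose $\psi - \varphi$ attains a zero local maximum at $z_0 = (t_0, x_0) \in Q$. Since $\psi \in \mathcal{C}(Q)$ is continuous, $\psi^* = \psi$, so this is the genuine object to test. The goal is to verify
\[
F_{-|\nabla \varphi_2|^*(z_0)}(z_0, |\nabla \varphi_1|(z_0), \partial_t \varphi(z_0)) \leq 0.
\]
Two ingredients suffice: the time-derivative equality $\partial_t \psi(z_0) = \partial_t \varphi(z_0)$, obtained by freezing $x = x_0$ so that $t \mapsto (\psi - \varphi)(t, x_0)$ has a local maximum at $t_0$ (both time derivatives being continuous), and a spatial slope estimate relating $|\nabla \varphi_1|(z_0)$ to the slopes of the components of $\psi$.

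For the slope estimate, I restrict the local-maximum inequality to the time slice $t = t_0$ and rearrange: for $y$ near $x_0$,
\[
\varphi_1(t_0, x_0) - \varphi_1(t_0, y) \leq [\psi_1(t_0, x_0) - \psi_1(t_0, y)] + [\psi_2(t_0, x_0) - \psi_2(t_0, y)] + [\varphi_2(t_0, y) - \varphi_2(t_0, x_0)].
\]
Applying $[\,\cdot\,]_+$ via $[a + b + c]_+ \le [a]_+ + |b| + |c|$, dividing by $d(y, x_0)$, and taking $\limsup$ as $y \to x_0$, I use $|\nabla^- \varphi_1| = |\nabla \varphi_1|$ (from $\varphi_1 \in \underline{\mathcal{C}}^1(Q)$), the trivial $|\nabla^- \psi_1| \le |\nabla \psi_1|$, and $|\nabla \cdot | \le |\nabla \cdot |^*$ for the $\mathcal{C}$-components to obtain
\[
|\nabla \varphi_1|(z_0) \leq |\nabla \psi_1|(z_0) + |\nabla \psi_2|^*(z_0) + |\nabla \varphi_2|^*(z_0).
\]

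To conclude, set $A := |\nabla \psi_1|(z_0) + |\nabla \psi_2|^*(z_0)$. Unpacking the pointwise hypothesis at $z_0$ gives
\[
F\bigl(z_0,\, |\nabla \psi_1|(z_0) + |\nabla \psi_2|^*(z_0)\, p',\, \partial_t \psi(z_0)\bigr) \leq 0 \qquad \text{for every } p' \in B,
\]
and convexity and evenness of $F(z_0, \cdot, \partial_t \psi(z_0))$ in $p$ upgrade this to $F(z_0, p, \partial_t \psi(z_0)) \leq 0$ for every $p \in [-A, A]$. The slope estimate says precisely $|\nabla \varphi_1|(z_0) - |\nabla \varphi_2|^*(z_0) \leq A$, so as $p'$ runs through $B = [-1, 1]$ the point $|\nabla \varphi_1|(z_0) - |\nabla \varphi_2|^*(z_0)\, p'$ enters $[-A, A]$. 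At such a $p'$, using $\partial_t \psi(z_0) = \partial_t \varphi(z_0)$, one reads $F(z_0, |\nabla \varphi_1|(z_0) - |\nabla \varphi_2|^*(z_0)\, p', \partial_t \varphi(z_0)) \leq 0$; taking the infimum over $p' \in B$ delivers the required subsolution inequality.

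The main obstacle is keeping straight which smoothness class implements which slope identity. The one-sided operation $[\,\cdot\,]_+$ must be applied in the unique direction that converts the test-function slope into $|\nabla^- \varphi_1|$, where $\underline{\mathcal{C}}^1$ provides the identification with $|\nabla \varphi_1|$; the hypothesis class $\overline{\mathcal{C}}^1$ enters merely for continuity and is not used quantitatively, since only a one-sided upper bound on $\psi_1$'s slope is needed. The upper semicontinuous envelopes on the $\mathcal{C}$-components are absorbed for free via $|\nabla \cdot | \le |\nabla \cdot |^*$.
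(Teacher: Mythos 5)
Your proof is correct and is essentially the argument the paper itself omits (it defers to Lemma 2.8 of Gangbo--\'{S}wi\c{e}ch): equate time derivatives at the interior maximum, derive the slope inequality $|\nabla\varphi_1|(z_0)\le|\nabla\psi_1|(z_0)+|\nabla\psi_2|^*(z_0)+|\nabla\varphi_2|^*(z_0)$ on the time slice, and use evenness and convexity of $F$ in $p$ to pass from the hypothesis (a sup over an interval) to the conclusion (an inf over an interval). The only spot worth one extra clause is the claim that $|\nabla\varphi_1|(z_0)-|\nabla\varphi_2|^*(z_0)p'$ ``enters'' $[-A,A]$: besides $|\nabla\varphi_1|(z_0)-|\nabla\varphi_2|^*(z_0)\le A$ you also need the other endpoint $|\nabla\varphi_1|(z_0)+|\nabla\varphi_2|^*(z_0)\ge -A$, which holds trivially since all these quantities are nonnegative, so the two intervals indeed intersect and the intermediate value theorem (or just taking the point $\max\{|\nabla\varphi_1|(z_0)-|\nabla\varphi_2|^*(z_0),-A\}$) finishes the argument.
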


The proofs are straightforward
so we omit them.
For the proof of Proposition \ref{t:ss} see \cite[Lemma 2.8]{GS14b}.

\section{Stability results}
\label{s:stab}

Let $A$ be a topological space.
For a family of functions $\{u(\cdot; a)\}_{a \in A}$ defined on $Q$ take its upper and lower semicontinuous envelopes
$$
u^*(z; a) := \limsup_{(z', a') \to (z, a)}u(z'; a'),
\quad u_*(z; a) := \liminf_{(z', a') \to (z, a)}u(z'; a').
$$
The functions $u^*(\cdot; a)$ and $u^*(\cdot; a)$ are respectively called the upper and lower semilimit of $\{u(\cdot; a)\}$ at $a \in A$.
% Note that $\semiliminf_{\lambda \to 0}u_\lambda = -\semilimsup_{\lambda \to 0}(-u_\lambda)$ holds.
Also note that $u^*(\cdot; a)$ is upper semicontinuous and that for each $(z, a)$ there exists a sequence $(z_j, a_j)$ such that
$$
(z_j, a_j, u(z_j; a_j)) \to (z, a, u^*(z; a)).
$$

One of the main results of this section is:

\begin{lemma}[Stability]
\label{t:stab}
Let $F = F(z, p, q; a) \in \mathit{C}(Q\times\mathbf{R}\times\mathbf{R}\times A)$
and let $u = u(\cdot; a)$ be a family of metric viscosity subsolutions (resp.\ supersolution) of \eqref{e:ghj} with $F = F(\cdot; a)$.
Assume that $a \in A$ satisfies for each $z \in Q$
\begin{equation}
\label{e:astab}
\text{$\limsup_{a' \to a}\sup_{B_r(z)}u(\cdot; a') \leq \sup_{B_r(z)}u^*(\cdot; a)$
(resp.\ $\liminf_{a' \to a}\inf_{B_r(z)}u(\cdot; a') \leq \inf_{B_r(z)}u_*(\cdot; a)$)} \quad \text{}
\end{equation}
for all $r > 0$ small enough.
Then, the upper (resp.\ lower) semilimit $\overline{u} := u^*(\cdot, a)$ (resp.\ $\underline{u} := u^*(\cdot, a)$) is a metric viscosity subsolution (resp.\ supersolution) of \eqref{e:ghj} with $F = F(\cdot; a)$.
\end{lemma}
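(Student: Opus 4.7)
The plan is to follow the standard Barles--Perthame semilimit argument, adapted to the Gangbo--\'Swi\c ech framework. I treat the subsolution case; the supersolution case is entirely symmetric. Suppose $\overline u - \psi$ attains a zero local maximum at $z_0 = (t_0,x_0)$ on some closed ball $\overline{B_R(z_0)} \subset Q$, where $\psi = \psi_1 + \psi_2$ with $\psi_1 \in \underline{\mathcal{C}}^1(Q)$ and $\psi_2 \in \mathcal{C}(Q)$.

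\emph{Step 1 (strictification).} Set $g(t,x) := \epsilon d(x,x_0)^2 + \epsilon (t-t_0)^2$ for small $\epsilon > 0$, absorb it into $\psi_2$ via $\tilde\psi_2 := \psi_2 + g \in \mathcal{C}(Q)$, and put $\tilde\psi := \psi_1 + \tilde\psi_2$. By Lemma \ref{t:c1ex} the spatial local slope of $g$ equals $2\epsilon d(\cdot,x_0)$, continuous and vanishing at $x_0$. Combined with the subadditivity $\bigl||\nabla(u+v)| - |\nabla u|\bigr| \le |\nabla v|$ of the local slope, this gives $|\nabla \tilde\psi_2|^*(z_0) = |\nabla \psi_2|^*(z_0)$; similarly $|\nabla \psi_1|$ is untouched and $\partial_t \tilde\psi(z_0) = \partial_t \psi(z_0)$ since $\partial_t g(z_0)=0$. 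The perturbed function $\overline u - \tilde\psi$ has a strict local maximum at $z_0$, while $\psi_1+\tilde\psi_2$ remains an admissible test decomposition.

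\emph{Step 2 (approximating maxima and passing to the limit).} By the definition of the semilimit, pick $(z_j', a_j) \to (z_0, a)$ with $u(z_j';a_j) \to \overline u(z_0) = \tilde\psi(z_0)$. Since $\overline{B_R(z_0)}$ is compact and $u(\cdot;a_j)^*$ is upper semicontinuous, choose a maximizer $z_j \in \overline{B_R(z_0)}$ of $u(\cdot;a_j)^* - \tilde\psi$ with value $M_j$. Evaluating at $z_j'$ gives $\liminf_j M_j \ge 0$. Conversely, for any subsequence $z_{j_k} \to z_\infty$, approximate $u(\cdot;a_{j_k})^*$ by nearby values of $u(\cdot;a_{j_k})$ and apply \eqref{e:astab} at $z_\infty$ with $r \to 0$, using upper semicontinuity of $\overline u$, to conclude $\limsup_k u(z_{j_k};a_{j_k})^* \le \overline u(z_\infty)$, hence $\limsup_k M_{j_k} \le 0$; strict maximality then forces $z_\infty = z_0$, so $M_j \to 0$ and $z_j \to z_0$. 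For $j$ large, $z_j$ lies in the interior of $Q$ and is a zero local maximum of $u(\cdot;a_j)^* - (\tilde\psi + M_j)$, so the subsolution property for $F(\cdot;a_j)$ delivers
$$F_{-|\nabla \tilde\psi_2|^*(z_j)}(z_j,\, |\nabla \psi_1|(z_j),\, \partial_t \tilde\psi(z_j)) \le 0.$$
Given $\eta > 0$, upper semicontinuity of $|\nabla \tilde\psi_2|^*$ at $z_0$ yields $-|\nabla \tilde\psi_2|^*(z_j) \ge -|\nabla \psi_2|^*(z_0) - \eta$ eventually; monotonicity of $r \mapsto F_r$ lets us replace the subscript, and joint continuity of $F_r$ in all arguments (including the parameter $a$, coming from the compactness of $B$ and continuity of $F$) allows us to send $j \to \infty$ and then $\eta \to 0$. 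The result is
$$F_{-|\nabla \psi_2|^*(z_0)}(z_0,\, |\nabla \psi_1|(z_0),\, \partial_t \psi(z_0)) \le 0,$$
which is the required subsolution inequality for $F(\cdot;a)$.

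The principal technical obstacle is Step 1: one must insert the strictifying perturbation into the decomposition $\psi_1 + \psi_2$ without changing the numbers $|\nabla \psi_1|(z_0)$, $|\nabla \psi_2|^*(z_0)$, $\partial_t \psi(z_0)$ that appear in the test inequality. Placing the quadratic term in the $\mathcal{C}(Q)$-slot (which is trivially closed under addition) avoids the delicate question of whether $\underline{\mathcal{C}}^1(Q)$ is stable under sums, while the subadditivity of local slopes combined with $|\nabla g|(z_0)=0$ preserves the USC envelope $|\nabla\psi_2|^*$ at the contact point.
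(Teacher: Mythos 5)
Your Step 1 matches the paper's strictification (the quadratic penalty is placed in the $\mathcal{C}(Q)$ slot exactly as in the paper, and your observation that its local slope vanishes continuously at the contact point is the right reason the subscript $|\nabla\psi_2|^*(z_0)$ survives). The gap is in Step 2, at the words ``Since $\overline{B_R(z_0)}$ is compact.'' The lemma is stated on a general complete geodesic metric space; closed balls there need not be compact (think of the infinite-dimensional spaces that motivate the Gangbo--\'Swi\c{e}ch theory), so an upper semicontinuous function such as $u(\cdot;a_j)^*-\tilde\psi$ need not attain its supremum on $\overline{B_R(z_0)}$, and your maximizer $z_j$ may simply not exist. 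This is not a removable technicality: the entire reason the hypothesis \eqref{e:astab} is imposed as an assumption (rather than derived) is that local compactness is only a \emph{sufficient} condition for it (Remark \ref{t:scstab}); if one were willing to assume compact balls, the statement would need no such hypothesis and your argument would go through.

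The paper's proof circumvents the missing compactness with Ekeland's variational principle (Lemma \ref{t:ekeland}): starting from an approximate maximizer $z_j$ with defect $m_j := \sup_{\overline{B}_R}((u_j)^*-\tilde\psi)-((u_j)^*-\tilde\psi)(z_j)$, it produces a point $w_j$ at which $(u_j)^*-\tilde\psi - m_j\,d(y_j,\cdot)$ attains an \emph{exact} maximum. The extra term $m_j\,d(y_j,\cdot)$ is Lipschitz with slope at most $m_j$, and this is precisely what the $F_r$ machinery in the definition of metric viscosity subsolution is built to absorb: it contributes an additional $m_j$ to the subscript $r_j$, which is harmless because $m_j\to 0$ (a fact that itself uses \eqref{e:astab}). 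If you want to repair your proof without Ekeland, you would have to add a properness assumption on $X$, which would weaken the lemma; otherwise you need to replace ``choose a maximizer'' by the Ekeland perturbation and then track the resulting Lipschitz term through the subscript of $F_r$ as the paper does.
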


\begin{remark}
\label{t:scstab}
An sufficient condition of the assumption \eqref{e:astab} is that the metric space $X$ is locally compact.
Indeed, since $B := \overline{B}_r(z)$ is compact for small $r$,
we are able to take a sequence of maximum points $z_{a'}$ of $u(\cdot; a')$ and assume that $z_{a'}$ converges to some $\bar{z} \in B$ as $a' \to a$ by taking a subsequence if necessary.
Then,
$$
\limsup_{a' \to a}\sup_{B}u(\cdot; a') = \limsup_{a' \to a}u(z_{a'}; a') \le u^*(\bar{z}; a) \le \sup_{B}u^*(\cdot; a).
$$

We also point out that if the assumption is removed, then the lemma may be false in general;
see \cite{GHN14}.
\end{remark}

A direct consequence of Lemma \ref{t:stab} is:

\begin{corollary}[Stability under extremum]
\label{t:sup}
Let $F = F(z, p, q) \in \mathit{C}(Q\times\mathbf{R}\times\mathbf{R})$.
Let $S$ be a family of metric viscosity subsolutions (resp.\ supersolutions) of \eqref{e:ghj}.
Then $\overline{u}(z) := \sup_{v \in \mathcal{S}}v(z)$ (resp.\ $\underline{u}(z) := \inf_{v \in \mathcal{S}}v(z)$) is a metric viscosity subsolution (resp.\ supersolutions) of \eqref{e:ghj}.
\end{corollary}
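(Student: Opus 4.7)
The plan is to deduce the corollary directly from Lemma \ref{t:stab} by indexing the family $S$ over a parameter space whose topology is weak enough to force the semilimit in the lemma to recover the supremum over $S$.

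Concretely, I would take $A := S$ equipped with the \emph{indiscrete} topology and set $u(\cdot;v) := v$ for every $v \in A$. Each $u(\cdot;v)$ is a metric viscosity subsolution by hypothesis, and since the Hamiltonian $F$ does not depend on $a \in A$, its continuity on $Q\times\mathbf{R}\times\mathbf{R}\times A$ is automatic. Because the only nonempty open subset of $A$ is $A$ itself, every open neighborhood of a point $(z,a)$ in the product has the form $V\times A$, so a short computation yields
$$
u^*(z;a) \;=\; \limsup_{z'\to z}\sup_{v\in S}v(z') \;=\; \overline{u}^*(z)
$$
for any choice of $a \in A$, and the hypothesis \eqref{e:astab} reduces to $\sup_{B_r(z)}\overline{u} \le \sup_{B_r(z)}\overline{u}^*$, which is immediate from $\overline{u} \le \overline{u}^*$. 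Lemma \ref{t:stab} then gives the desired subsolution property of $\overline{u}^*$, i.e.\ of $\overline{u}$. The infimum case is handled by the same construction applied to the family of supersolutions, with lower semilimits in place of upper ones.

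The only real subtlety is the choice of topology on $A$: the discrete topology would produce the trivial semilimit $v^*$ of a single subsolution, whereas the indiscrete topology makes the parameter limit $a' \to a$ effectively sweep over the whole family and thereby produces the supremum required in the conclusion. Once this observation is made, the remainder of the argument is a direct verification of the hypotheses of Lemma \ref{t:stab}, and so I do not anticipate any substantive obstacle.
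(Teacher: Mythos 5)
Your proposal is correct and coincides with the paper's own argument: the paper likewise sets $A=S$ with the indiscrete topology, takes the trivial family $u(\cdot;v)=v$, observes that the semilimit then equals $\overline{u}^*$ and that hypothesis \eqref{e:astab} holds automatically, and applies Lemma \ref{t:stab}. No changes needed.
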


% The proof is just to set $A = S$ with the indiscrete topology in order to apply Lemma \ref{t:stab}

\begin{proof}
Set $A = S$ with the indiscrete topology
and trivial families $\{F\}_{v \in S}$ and $\{U(\cdot; v) = v\}_{v \in S}$.
Note that $U^*(z; v) = u^*(z)$ and $\limsup_{v' \to v}\sup_{B_r(z)}U(\cdot; v') = \sup_{B_r(z)}U^*(\cdot; v) = \sup_{B_r(z)}u^*$.
Therefore, by applying Lemma \ref{t:stab} we see that $u$ is a metric viscosity subsolution of \eqref{e:ghj}.
\end{proof}

% We give a proof of only Lemma \ref{t:stab} because Proposition \ref{t:sup} can be verified by a similar argument.
Our proof of Lemma \ref{t:stab} is inspired by \cite{GS14}.
First recall Ekeland's variational principle of a classical version \cite{E74}, \cite{E79}.

\begin{lemma}[Ekeland's variational principle]
\label{t:ekeland}
Let $(X, d)$ be a complete metric space and let $F \colon X \to \bar{\mathbf{R}}$ be a upper semicontinuous function bounded from above (resp.\ below) satisfying $D(F) := \{ F > -\infty \}$ (resp.\ $D(F) := \{ F < +\infty \}$) is not empty.
Then, for each $\hat{x} \in D(F)$, there exists $\bar{x} \in X$ such that $d(\hat{x}, \bar{x}) \le 1$, $F(\bar{x}) \ge F(\hat{x})$ (resp.\ $F(\bar{x}) \le F(\hat{x})$) and $x \to F(x)-m d(\bar{x}, x)$ attains a strict maximum (resp.\ minimum) at $\bar{x}$ with $m := \sup F-F(\hat{x})$ (resp.\ $m := \inf F-F(\hat{x})$).
\end{lemma}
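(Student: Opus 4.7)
The plan is to handle the supremum version; the infimum case follows by replacing $F$ with $-F$. If $m = 0$ then $\hat{x}$ is already a global maximum and I may take $\bar{x} = \hat{x}$, so I assume $m > 0$ below. The key idea is to introduce a partial order on $D(F)$, construct a sequence that is increasing in this order and is forced to converge by completeness of $X$, and then verify that its limit is maximal for that order; the strict-maximum statement will be a direct rewriting of maximality.

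Define $x \preceq y$ by $F(y) \geq F(x) + m\,d(x,y)$; the triangle inequality makes $\preceq$ a partial order on $D(F)$. Set $x_0 := \hat{x}$ and, inductively, $S_n := \{y \in X : x_n \preceq y\}$ and $s_n := \sup_{S_n} F$, which is finite since $F$ is bounded above, then pick $x_{n+1} \in S_n$ with $F(x_{n+1}) \geq s_n - 2^{-n-1}$. Transitivity of $\preceq$ gives $x_n \preceq x_{n+k}$ and $S_{n+k} \subseteq S_n$ for every $k \geq 0$, so $(s_n)$ is non-increasing and
\[
m\,d(x_n, x_{n+k}) \leq F(x_{n+k}) - F(x_n) \leq s_n - F(x_n) \to 0,
\]
the last limit following from $F(x_n) \geq s_{n-1} - 2^{-n} \geq s_n - 2^{-n}$. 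Hence $(x_n)$ is Cauchy and, by completeness, converges to some $\bar{x} \in X$. Upper semicontinuity lets me pass to the limit $k \to \infty$ in $F(x_{n+k}) \geq F(x_n) + m\,d(x_n, x_{n+k})$, yielding $F(\bar{x}) \geq F(x_n) + m\,d(x_n, \bar{x})$, i.e., $x_n \preceq \bar{x}$ for every $n$. Taking $n = 0$ gives $F(\bar{x}) \geq F(\hat{x})$, and together with $F(\bar{x}) \leq \sup F = F(\hat{x}) + m$ this forces $d(\hat{x}, \bar{x}) \leq 1$.

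For the strict maximum, suppose $\bar{x} \preceq y$ for some $y \in X$. By transitivity $x_n \preceq y$ for every $n$, so $F(y) \leq s_n$, while $F(y) \geq F(\bar{x}) + m\,d(\bar{x}, y) \geq F(x_n) + m\,d(\bar{x}, y)$. Combining, $m\,d(\bar{x}, y) \leq s_n - F(x_n) \to 0$, which forces $y = \bar{x}$. Therefore $F(y) - m\,d(\bar{x}, y) < F(\bar{x})$ for every $y \neq \bar{x}$, the required strict maximum.

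The step I expect to be the main obstacle is the maximality of $\bar{x}$: one must simultaneously ensure that the approximate-supremum tails $s_n - F(x_n)$ shrink (handled by the $2^{-n-1}$ tolerance), that the limit $\bar{x}$ inherits $x_n \preceq \bar{x}$ for every $n$ (handled by upper semicontinuity together with $d(x_n, x_{n+k}) \to d(x_n, \bar{x})$), and that these two ingredients squeeze out every candidate strict upper bound of $\bar{x}$ in the order. Completeness of $X$ enters exactly once, in producing $\bar{x}$ as the limit of the Cauchy sequence.
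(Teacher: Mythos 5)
Your proof is correct and is the standard ordering-and-iteration argument for Ekeland's principle from \cite{E79}, which is precisely the source the paper cites in lieu of giving its own proof: you define the partial order $x\preceq y$ iff $F(y)\ge F(x)+m\,d(x,y)$, build an almost-maximizing $\preceq$-increasing sequence, use completeness and upper semicontinuity to extract a $\preceq$-maximal limit $\bar{x}$, and read off the strict maximum from maximality; all estimates ($d(\hat{x},\bar{x})\le 1$ from $m\,d(\hat{x},\bar{x})\le F(\bar{x})-F(\hat{x})\le m$, and $s_n-F(x_n)\le 2^{-n}$ squeezing out any $y\succeq\bar{x}$) check out. One small caveat: in the degenerate case $m=0$ your choice $\bar{x}=\hat{x}$ gives a maximum of $F(\cdot)-m\,d(\bar{x},\cdot)=F(\cdot)$ but not necessarily a \emph{strict} one (take $F$ constant); this is really a defect of the lemma's statement rather than of your argument, and it is harmless where the lemma is applied.
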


See \cite{E79} for the proof.

\begin{proof}[Proof of Lemma \ref{t:stab}]
Fix $\psi = \psi_1+\psi_2$ with $\psi_1 \in \underline{\mathcal{C}}^1(Q)$ and $\psi_2 \in \mathcal{C}(Q)$ such that $\overline{u}-\psi$ attains a zero maximum at $\hat{z} = (\hat{t}, \hat{x})$ over $\overline{B}_R(\hat{z}) \subset Q$ with some $R > 0$, i.e.
\begin{equation}
\label{e:stabtest}
(\overline{u}-\psi)(\hat{z}) = \sup_{\overline{B}_R(\hat{z})}(\overline{u}-\psi) = 0.
\end{equation}
Set $\tilde{\psi}_2(z) := \psi_2(z)+d(\hat{x}, x)^2+(t-\hat{t})^2$ with $z = (t, x)$ and $\tilde{\psi} = \psi_1+\tilde{\psi}_2$.

Take a subsequence $a_j \to a$ and a sequence of points $z_j = (t_j, x_j) \in \overline{B}_R(\hat{z})$ such that $z_j \to \hat{z}$ and $u_j(z_j) = u(z_j; a_j) \to \overline{u}(\hat{z}) = u^*(\hat{z}; a)$,
where $u_j := u(\cdot; a_j)$.
We see by Ekeland's variational principle (Lemma \ref{t:ekeland}) that there exists $w_j = (s_j, y_j) \in \overline{B}_R(\hat{z})$ such that $z = (t, x) \mapsto ((u_j)^*-\tilde{\psi})(z)-m_j d(y_j, x)$ attains maximum at $w_j$ over $\overline{B}_R(\hat{z})$ with
$$
m_j := \sup_{\overline{B}_R(\hat{z})}((u_j)^*-\tilde{\psi})-((u_j)^*-\tilde{\psi})(z_j) \geq 0
$$
Note that
$$
\limsup_{j \to \infty}m_j = \limsup_{j \to \infty}\sup_{\overline{B}_R(\hat{z})}((u_j)^*-\tilde{\psi})-(\overline{u}-\tilde{\psi})(\hat{z})
$$
and so $m_j \to 0$ by the assumptions \eqref{e:astab} and \eqref{e:stabtest}.
We also observe that
$$
((u_j)^*-\tilde{\psi})(w_j)
\ge (u_j-\tilde{\psi})(z_j)-m_j d(y_j, x_j)
\ge (u_j-\tilde{\psi})(z_j)-2R m_j
$$
and that the last term converges to $(\overline{u}-\tilde{\psi})(\hat{z})$ as $j \to \infty$.
Therefore,
$$
\begin{aligned}
\limsup_{j \to \infty}d(\hat{x}, y_j)^2+(s_j-\hat{t})^2
&\le \limsup_{j \to \infty}((u_j)^*-\psi)(w_j)-(\overline{u}-\tilde{\psi})(\hat{z}) \\
&\leq \limsup_{j \to \infty}\sup_{\overline{B}_R(\hat{z})}((u_j)^*-\psi)-(\overline{u}-\tilde{\psi})(\hat{z})
\end{aligned}
$$
and it follows from \eqref{e:astab} and \eqref{e:stabtest} that $w_j = (s_j, y_j) \to \hat{z} = (\hat{t}, \hat{x})$.

Now, since $u_j$ is a metric viscosity subsolution,
$$
F_{-r_j}(w_j, |\nabla \psi_1|(w_j), \partial_t\tilde{\psi}(w_j); a_j) \leq 0.
$$
Here, $r_j$ is some non-negative number such that
$$
r_j \le |\nabla \psi_2|^*(w_j)+m_j+2d(\hat{x}, y_j)+2|s_j-\hat{t}|
$$
and so $\limsup r_j \le |\nabla \psi_2|^*(\hat{z})$.
Since $(z, p, q, r; a) \to F_r(z, p, q; a)$ is continuous and $r \to F_r(z, p, q; a)$ is non-decreasing,
we see that
$$
F_{-|\nabla \psi_2|^*(\hat{z})}(\hat{z}, |\nabla \psi_1|(\hat{z}), \partial_t\psi(\hat{z}); a) \leq 0.
$$
Therefore, $u$ is a subsolution.
\end{proof}

Another goal of this section is a principle to construct a metric viscosity solution by the Perron method.

\begin{proposition}[Perron method]
\label{t:perron}
Let $F = F(z, p, q) \in \mathit{C}(Q\times\mathbf{R}\times\mathbf{R})$
and let $g$ be an $\bar{\mathbf{R}}$-valued function on $\partial Q$.
Let $\mathcal{S}$ denote the set of all metric viscosity subsolutions (resp.\ supersolution) $v$ of \eqref{e:ghj} satisfying $v^* \le g$ (resp.\ $v_* \le g$) on $\partial Q$.
Then, $u(z) := \sup_{v \in \mathcal{S}}v(z)$ (resp.\ $u(z) := \inf_{v \in \mathcal{S}}v(z)$) is a metric viscosity solution of \eqref{e:ghj}.
\end{proposition}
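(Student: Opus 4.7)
The plan is to follow the classical Perron method. Consider the subsolution case, with $u := \sup_{v \in \mathcal{S}} v$; the supersolution case is symmetric. Since $u$ is the pointwise supremum of a family of metric viscosity subsolutions, the stability under extremum (Corollary \ref{t:sup}) immediately gives that $u$ is itself a subsolution of \eqref{e:ghj}, and $u^* \leq g$ on $\partial Q$ is inherited from the members of $\mathcal{S}$. The substantive part is to prove that $u_*$ is a supersolution, so that $u$ is a full solution.

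I argue by contradiction: suppose $u_*$ fails the supersolution inequality at some $\hat z = (\hat t, \hat x) \in Q$ with test $\psi = \psi_1 + \psi_2$, $\psi_1 \in \overline{\mathcal{C}}^1(Q)$, $\psi_2 \in \mathcal{C}(Q)$, so that $u_* - \psi$ attains a zero local minimum at $\hat z$ on some $\overline{B}_R(\hat z)$ while
$$
F_{|\nabla \psi_2|^*(\hat z)}(\hat z, |\nabla \psi_1|(\hat z), \partial_t \psi(\hat z)) < 0.
$$
The idea is to bump $\psi$ upward into a subsolution $\psi^\epsilon$ strictly exceeding $u$ somewhere near $\hat z$, and paste it into $u$ to obtain a strictly larger member of $\mathcal{S}$. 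Set $\rho(z) := d(\hat x, x)^2 + (t - \hat t)^2$ and $\psi^\epsilon(z) := \psi(z) + \epsilon - \delta \rho(z)$ for positive parameters $\epsilon, \delta$ to be chosen. Lemma \ref{t:c1ex} puts $\rho$ in $\underline{\mathcal{C}}^1(Q) \subset \mathcal{C}(Q)$, so $\psi^\epsilon$ takes the form $\psi_1 + (\psi_2 + \epsilon - \delta \rho)$ required by Proposition \ref{t:ss}. Using $|\nabla \rho|(\hat z) = 0$, subadditivity of the local slope, continuity of $(z, p, q, r) \mapsto F_r(z, p, q)$, monotonicity of $F_r$ in $r$, and upper semicontinuity of $|\nabla(\psi_2 - \delta \rho)|^*$, I first fix $\delta$ small and then $r' \in (0, R)$ small enough that
$$
F_{|\nabla(\psi_2 - \delta \rho)|^*(z)}(z, |\nabla \psi_1|(z), \partial_t \psi^\epsilon(z)) \leq 0 \quad \text{on } B_{r'}(\hat z),
$$
so that $\psi^\epsilon$ is a subsolution in $B_{r'}(\hat z)$ by Proposition \ref{t:ss}.

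With $\delta$ and $r'$ fixed, I then choose $\epsilon > 0$ so small that $\delta \rho \geq 2 \epsilon$ on the shell $\{r'/2 \leq d_{\mathrm{sp}}(z, \hat z) \leq r'\}$, using the spacetime distance implicit in the paper's notation $B_r$. Since $u \geq u_* \geq \psi$ on $B_R(\hat z)$ by the local minimum condition, this forces $\psi^\epsilon \leq u - \epsilon < u$ on the shell. Define
$$
w(z) := \begin{cases} \max\{u(z), \psi^\epsilon(z)\} & z \in B_{r'/2}(\hat z), \\ u(z) & \text{otherwise}. \end{cases}
$$
Because the two formulas agree in a neighborhood of $\partial B_{r'/2}(\hat z)$, Proposition \ref{t:loc} together with Corollary \ref{t:sup} makes $w$ a subsolution of \eqref{e:ghj} in all of $Q$; the condition $w^* \leq g$ on $\partial Q$ is automatic from $w = u$ near $\partial Q$. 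Taking $z_n \to \hat z$ with $u(z_n) \to u_*(\hat z) = \psi(\hat z)$ yields, for large $n$, $\psi^\epsilon(z_n) > u(z_n)$ and hence $w(z_n) > u(z_n)$, contradicting $u = \sup_{v \in \mathcal{S}} v$. I expect the chief technical obstacle to be the first half of the bump construction: one must propagate the strict inequality from $\hat z$ into a full neighborhood even though $|\nabla(\psi_2 - \delta \rho)|^*$ is merely upper semicontinuous in $z$. The resolution is to order the choices carefully, fixing $\delta$ first and $r'$ afterwards, and to exploit the monotonicity of $F_r$ in $r$ rather than rely on naive pointwise continuity.
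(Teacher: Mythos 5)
Your proposal is correct and follows essentially the same route as the paper: the subsolution part via Corollary \ref{t:sup}, and the supersolution part by contradiction, bumping the test function with a quadratic distance term plus a small constant, invoking Proposition \ref{t:ss} to make the bump a classical subsolution near $\hat z$, pasting with $\max$ via Proposition \ref{t:loc} and Corollary \ref{t:sup}, and contradicting maximality of $u$. Your version is in fact slightly more careful than the paper's at the final step, where you extract a sequence $z_n \to \hat z$ with $u(z_n) \to u_*(\hat z)$ rather than evaluating at $\hat z$ directly.
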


Perron method for construction of a viscosity solution to Hamilton-Jacobi equations was first presented by H.~Ishii \cite{I87}.
Actually, the authors of \cite{GS14} have already established a similar result for metric viscosity solutions (\cite[Theorem 7.6]{GS14}).
However, let us give a proof since we have slightly improved the result to apply it directly to construction of a solution of the limit equation \eqref{e:lhj}.
We remark that the function
$$
\overline{u}(x) :=
\begin{cases}
+\infty & \text{if $x \in Q$} \\
g(x) & \text{if $x \in \partial Q$}
\end{cases}
$$
is a supersolution of \eqref{e:ghj}.

\begin{proof}
We only show that $u$ is a supersolution since being a subsolution is due to Corollary \ref{t:sup}.
Fix $\psi = \psi_1+\psi_2$ with $\psi_1 \in \overline{\mathcal{C}}^1(Q)$ and $\psi_2 \in \mathcal{C}(Q)$ such that $u_*-\psi$ attains a zero minimum at $\hat{z} := (\hat{t}, \hat{x})$ over $\overline{B}_R(\hat{z}) \subset Q$ with some $R > 0$.
Set $\tilde{\psi}_2(z) := \psi_2(z)-d(\hat{z}, z)^2$ and $\tilde{\psi} = \psi_1+\tilde{\psi}_2$.
Suppose by contradiction that
$$
F_{|\nabla \psi_2|^*(\hat{z})}(\hat{z}, |\nabla \psi_1|(\hat{z}), \partial_t\psi(\hat{z})) < 0.
$$
Since $(z, p, q, r) \mapsto F_r(z, p, q)$ is continuous and $r \mapsto F_r(z, p, q)$ is non-decreasing,
we may assume that $\tilde{\psi} = \psi_1+\tilde{\psi}_2$ is a subsolution of
$$
F_{|\nabla \tilde{\psi}_2|^*(z)}(z, |\nabla \psi_1|(z), \partial_t\tilde{\psi}(z)) \leq 0 \quad \text{for all $z \in B_R(\hat{z})$}
$$
by taking $R$ small enough.
Recalling Proposition \ref{t:ss}, we see that $\tilde{\psi}$ is a metric viscosity subsolution of \eqref{e:ghj} in $B_R(\hat{z})$.
Now observe that
$$
(u-\tilde{\psi})(z)
\geq (u_*-\tilde{\psi})(z)
\geq d(\hat{z}, z)^2 \geq \frac{R^2}{4} =: m > 0
$$
for all $z \in \overline{B}_{R}(\hat{z})\setminus B_{R/2}(\hat{z})$.
Construct a new function
$$
w(z) =
\begin{cases}
\max\{ \tilde{\psi}(z)+m/2, u(z) \} & \text{if $z \in B_{R}(\hat{z})$,} \\
u(z) & \text{otherwise.}
\end{cases}
$$
Then, $w$ is equal to $u$ on $Q\setminus B_{R/2}(\hat{z})$ and so it is a subsolution of \eqref{e:ghj} in $Q\setminus B_{R/2}(\hat{z})$.
It follows from Proposition \ref{t:cvu} and Corollary \ref{t:sup} that $w$ is a subsolution of \eqref{e:ghj} in $B_{R}(\hat{z})$.
Therefore, Proposition \ref{t:loc} shows that $w$ is a subsolution of \eqref{e:ghj} in $Q$ and so $w \in \mathcal{S}$.
In particular, $u(\hat{z}) \ge w(\hat{z})$ but $w(\hat{z}) = \psi(\hat{z})+m/2 = u(\hat{z})+m/2$.
Since $m > 0$, we obtain a contradiction and conclude that $u$ is a supersolution.
\end{proof}

\section{Application to large time behavior}
\label{s:longtime}

We study large time asymptotic behaviors of solutions of the Hamilton-Jacobi equation \eqref{e:hj} with a Hamiltonian $H$ satisfying (A1)--(A4) on a compact geodesic metric space $(X, d)$.
First note uniqueness of the constant $c$ such that \eqref{e:lhj} admits a solution.

\begin{proposition}
Assume (A1), (A2) and that $X$ is compact.
Let $c \in \mathbf{R}$ be a constant such that \eqref{e:lhj} admits a real-valued continuous solution.
Then,
\begin{equation}
\label{e:criv}
c = \max_{x \in X}H(x, 0).
\end{equation}
\end{proposition}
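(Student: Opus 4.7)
The plan is to prove the two inequalities $c \le \max_x H(x, 0)$ and $c \ge \max_x H(x, 0)$ separately. In both I exploit that $X$ is compact and $v$ is continuous, so that $v$ attains both its maximum and its minimum on $X$.

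For the easy direction $c \le \max_x H(x, 0)$: let $\check{x} \in X$ be a minimizer of $v$ and test the supersolution condition at $\check{x}$ with $\psi_1 \equiv v(\check{x})$ (a constant, hence in $\overline{\mathcal{C}}^1(X)$) and $\psi_2 \equiv 0$. Then $v - \psi$ has a zero local minimum at $\check{x}$ with $|\nabla \psi_1|(\check{x}) = 0 = |\nabla \psi_2|^*(\check{x})$, so the supersolution inequality for the equation $H(x, |Dv|) - c = 0$ collapses to $H(\check{x}, 0) \ge c$, giving $\max_x H(x, 0) \ge c$.

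For the hard direction $c \ge \max_x H(x, 0)$, I would fix an arbitrary $x_0 \in X$ and, for each $\epsilon > 0$, let $\hat{x}_\epsilon$ be a maximizer on $X$ of the continuous function $v(\cdot) - \epsilon d(\cdot, x_0)^2$. Consider the test function
\[
\psi(x) := v(\hat{x}_\epsilon) + \epsilon\bigl(d(x, x_0)^2 - d(\hat{x}_\epsilon, x_0)^2\bigr) = \psi_1(x) + \psi_2(x),
\]
with $\psi_1(x) = \epsilon d(x, x_0)^2 + \mathrm{const}$ and $\psi_2 \equiv 0$. By the stationary counterpart of Lemma \ref{t:c1ex} (applied with $\phi(s) = s$ and $a = \epsilon$), $\psi_1 \in \underline{\mathcal{C}}^1(X)$ with $|\nabla \psi_1|(x) = 2\epsilon d(x, x_0)$. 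Since by construction $v - \psi$ attains a zero maximum at $\hat{x}_\epsilon$, the subsolution condition yields
\[
H\bigl(\hat{x}_\epsilon, 2\epsilon d(\hat{x}_\epsilon, x_0)\bigr) \le c.
\]
The decisive step is to invoke convexity (A2) and evenness of $H(x, \cdot)$, which together give the pointwise bound
\[
H(x, 0) = H\bigl(x, \tfrac{1}{2} p + \tfrac{1}{2}(-p)\bigr) \le \tfrac{1}{2} H(x, p) + \tfrac{1}{2} H(x, -p) = H(x, p)
\]
for every $p$. Applied at $p = 2\epsilon d(\hat{x}_\epsilon, x_0)$ this promotes the subsolution inequality to $H(\hat{x}_\epsilon, 0) \le c$. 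Finally, $v(\hat{x}_\epsilon) - \epsilon d(\hat{x}_\epsilon, x_0)^2 \ge v(x_0)$ forces $d(\hat{x}_\epsilon, x_0)^2 \le (\max v - v(x_0))/\epsilon \to 0$, so $\hat{x}_\epsilon \to x_0$ as $\epsilon \to \infty$; continuity of $H$ then gives $H(x_0, 0) \le c$, and since $x_0$ was arbitrary, $\max_x H(x, 0) \le c$.

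The main obstacle is this second direction: the slope $|\nabla \psi_1|(\hat{x}_\epsilon) = 2\epsilon d(\hat{x}_\epsilon, x_0)$ appearing in the subsolution inequality need not tend to zero (we only control $\epsilon d^2$, not $\epsilon d$), so one cannot simply pass to the limit inside the first argument of $H$ and read off $H(x_0, 0) \le c$ directly. The convexity-plus-evenness bound $H(x, 0) \le H(x, p)$ is precisely what lets us discard this stubborn slope and obtain the desired inequality from the limit $\hat{x}_\epsilon \to x_0$ alone.
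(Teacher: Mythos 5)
Your proof is correct and takes essentially the same route as the paper: the supersolution test with a constant at a minimizer of $v$ gives $c\le\max_x H(x,0)$, and the penalized maximum of $v(\cdot)-\epsilon\,d(\cdot,x_0)^2$ together with the subsolution inequality gives the reverse bound. Your explicit use of convexity plus evenness to get $H(x,0)\le H(x,p)$ is exactly the step the paper compresses into ``by (A2) it is enough to show $\sup_x\inf_p H(x,p)\le c$,'' so if anything your write-up makes the limit passage slightly more transparent.
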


\begin{proof}
By the assumption (A2) it is enough to show that
\begin{equation}
\label{e:estcriv}
\sup_{x \in X}\inf_{p \in \mathbf{R}_+}H(x, p) \le c \le \sup_{x \in X}H(x, 0).
\end{equation}
It is easy to show the second inequality of \eqref{e:estcriv}.
Indeed, since a solution $v$ attains a minimum at some point $\hat{x} \in X$,
we have $H_0(\hat{x}, 0) = H(\hat{x}, 0) \ge c$, which implies $\sup_{x \in X}H(x, 0) \ge c$.
In order to prove the first inequality of \eqref{e:estcriv}, fix $\hat{x} \in X$.
Let us consider the function $v(x)-n d(x, \hat{x})^2/2$ and take its maximum point $x_n$ for each $n = 0, \cdots$.
Now, since $v(x_n)-n d(x_n, \hat{x})^2/2 \ge v(\hat{x})$, we have $d(x_n, \hat{x})^2 \le 2(\max v-v(\hat{x}))/n$.
Therefore, $x_n \to \hat{x}$ as $n \to \infty$.
Since $u$ is a subsolution, $H_0(x_n, n d(x_n, \hat{x})) = H(x_n, n d(x_n, \hat{x})) \le c$.
Hence, $\inf_{p \in \mathbf{R}_+}H(x_n, p) \le c$ and sending $n \to \infty$ yields $\inf_{p \in \mathbf{R}_+}H(\hat{x}, p) \le c$.
We now obtained the inequalities \eqref{e:estcriv}.
\end{proof}

\begin{remark}
It is a problem whether the inequalities \eqref{e:estcriv} holds even if we remove the compactness assumption.
One can show them by a similar argument to the proofs in Section \ref{s:stab} using Ekeland's variational principle provided $p \mapsto \sup_{x \in X}H(x, p)$ is continuous.
\end{remark}

In view of this proposition we hereafter define $c$ by \eqref{e:criv}.
Now we are able to state the main theorem of large time behavior.

\begin{theorem}[Large time behavior]
\label{t:ltb}
Assume (A1)--(A4), $u_0 \in \mathit{Lip}(X)$ and that $X$ is compact.
Let $u$ be a Lipschitz continuous solution of \eqref{e:hj} with \eqref{e:ic} on $[0, \infty)\times X$.
Then, $u(t, x)+c t$ converges to a function $v$ locally uniformly as $t \to \infty$ in X
and $v$ is a solution of \eqref{e:lhj}.
\end{theorem}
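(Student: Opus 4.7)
First, I would secure the a priori bounds that make the relaxed semilimits real-valued. Taking $w$ a continuous solution of \eqref{e:lhj} (whose existence is part of the program of Section \ref{s:longtime}), the function $W(t,x) := w(x) - ct$ solves \eqref{e:hj} with initial datum $w$, so comparison for the evolutionary equation gives
$$
|u(t,x) + ct - w(x)| \le \|u_0 - w\|_{\infty},
$$
and $u(t,\cdot)+ct$ is bounded uniformly in $t$. Coercivity (A3) together with $u_0 \in \Lip(X)$ yields a uniform Lipschitz bound $\Lip(u(t,\cdot)) \le L$ by the standard argument. I then define
$$
\overline{u}(x) := \limsup_{t\to\infty,\,y\to x}(u(t,y)+ct), \qquad \underline{u}(x) := \liminf_{t\to\infty,\,y\to x}(u(t,y)+ct),
$$
which are respectively USC and LSC, real-valued, and satisfy $\underline{u}\le\overline{u}$ on $X$.

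Second, I would apply Lemma \ref{t:stab} to the time-shifted family $u^s(t,x):=u(t+s,x)+c(t+s)$, each of which solves $\partial_t u^s + H(x,|Du^s|) - c = 0$. As $s\to\infty$, the upper relaxed semilimit is precisely $\overline{u}(x)$, independent of $t$; the sufficient condition \eqref{e:astab} is automatic since $X$ is compact (Remark \ref{t:scstab}). Lemma \ref{t:stab} therefore gives that $\overline{u}$, viewed as a function on $\mathbf{R}\times X$, is a metric viscosity subsolution of $\partial_t w + H(x,|Dw|) - c = 0$, and consistency (Proposition \ref{t:gshj}) converts this into the stationary statement that $\overline{u}$ is a metric viscosity subsolution of $H(x,|Dv|)=c$ on $X$. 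An analogous argument shows $\underline{u}$ is a supersolution.

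Third, I would show the two semilimits agree on $A:=\{x\in X:H(x,0)=c\}$, which is closed and, by (A1), (A4) and compactness, nonempty. Since $H(x,\cdot)$ is even and convex with minimum at $p=0$, one has $H(x_0,p)\ge c$ for all $p$ when $x_0\in A$. A test-function argument (penalize by $M d(\cdot,x_0)^2\in \underline{\mathcal{C}}^1$, exploit the definition of $F_r$ for $r\le 0$ together with the fact that $H(x_0,\cdot)$ is minimized at the origin, and send $M\to\infty$) extracts from the metric viscosity subsolution property the one-variable inequality $\partial_t(u+ct)\le 0$ at $x_0$ in viscosity sense, so $t\mapsto u(t,x_0)+ct$ is non-increasing. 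Combined with the bound from step one, the pointwise limit exists on $A$; the uniform Lipschitz estimate makes $\{u(t,\cdot)+ct\}_t$ equicontinuous on the compact set $A$, and Dini's theorem upgrades monotone pointwise convergence to uniform convergence, whence $\overline{u}=\underline{u}$ on $A$.

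Finally, I would invoke the comparison principle for \eqref{e:lhj} relative to $A$, proven (as announced in the introduction) via the Fathi--Siconolfi Perron approach adapted to the Gangbo--\'{S}wi\c{e}ch framework: a subsolution $\overline{u}$ and a supersolution $\underline{u}$ with $\overline{u}\le\underline{u}$ on $A$ satisfy $\overline{u}\le\underline{u}$ on all of $X$. Combined with $\underline{u}\le\overline{u}$, this forces $v:=\overline{u}=\underline{u}$, which is continuous and is a metric viscosity solution of \eqref{e:lhj}. Agreement of the USC and LSC envelopes, together with uniform Lipschitz equicontinuity, yields uniform (hence locally uniform) convergence $u(t,\cdot)+ct\to v$ on $X$. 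The hard part is this last step: at the critical value $c$ the stationary equation admits a nontrivial family of solutions and classical comparison fails, so only comparison relative to the ``internal boundary'' $A$ can close the argument; transporting the Fathi--Siconolfi construction to the Gangbo--\'{S}wi\c{e}ch metric viscosity framework is the principal new analytic content, with a secondary subtlety in step three of rigorously handling the $\psi_1+\psi_2$ test-function decomposition to extract $t$-monotonicity at points of $A$.
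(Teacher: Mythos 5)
Your proposal is correct and follows essentially the same route as the paper: a priori bounds via a stationary solution used as a barrier, stability of the relaxed semilimits, monotone (Dini) convergence on $A$, and the comparison principle for \eqref{e:lhj} relative to $A$. The only cosmetic difference is that you derive the stationary sub-/supersolution property of the semilimits by applying Lemma \ref{t:stab} to the time-shifted family $u(t+s,\cdot)+c(t+s)$ as $s\to\infty$, whereas the paper uses the rescaled family $w^\lambda(t,x)=u(t/\lambda,x)+ct/\lambda$ solving $\lambda\partial_t w^\lambda+H(x,|Dw^\lambda|)=c$ as $\lambda\to 0$; the two are equivalent, and your penalization argument justifying $\partial_t(u+ct)\le 0$ at points of $A$ is in fact more explicit than the paper's one-line assertion.
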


In order to prove this theorem we first establish regularity, existence and comparison results for the stationary equation \eqref{e:lhj}.
Set
$$
A := \{ x \in X \mid H(x, 0) = c \}.
$$

\begin{proposition}[Lipschitz continuity of solutions of \eqref{e:lhj}]
\label{t:liplhj}
Assume (A1), (A3) and (A4).
Then, real-valued continuous solutions of \eqref{e:lhj} are equi-Lipschitz continuous.
\end{proposition}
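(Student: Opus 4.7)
The plan is to prove the a priori stronger statement that every real-valued continuous metric viscosity subsolution $v$ of \eqref{e:lhj} is $L$-Lipschitz, with a constant $L>0$ depending only on the Hamiltonian $H$; equi-Lipschitz continuity of solutions then follows immediately. By coercivity (A3) together with the finiteness of $c=\sup_{x\in X}H(x,0)$ from (A4), choose $L>0$ so that $\inf_{x\in X}H(x,p)>c$ for every $p\ge L$.

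Fix $\hat{x}\in X$ and $\epsilon>0$, and consider the continuous function $w(y):=v(y)-(L+\epsilon)\,d(y,\hat{x})$, which attains its maximum over the compact space $X$ at some point $\bar{y}$. I claim $\bar{y}=\hat{x}$. If instead $\bar{y}\ne\hat{x}$, set
\[
\psi_1(y):=(L+\epsilon)\,d(y,\hat{x})+v(\bar{y})-(L+\epsilon)\,d(\bar{y},\hat{x}),\qquad \psi_2\equiv 0,
\]
so that $v-(\psi_1+\psi_2)$ attains a zero local maximum at $\bar{y}$ on the ball $B_R(\bar{y})\subset X\setminus\{\hat{x}\}$ with $R:=d(\bar{y},\hat{x})/2$. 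In a geodesic space, a unit-speed geodesic from $\bar{y}$ toward $\hat{x}$ gives $|\nabla^{-}d(\cdot,\hat{x})|(\bar{y})\ge 1$, while the triangle inequality bounds the full local slope by $1$; hence all three slopes of $d(\cdot,\hat{x})$ equal $1$ and are continuous on $B_R(\bar{y})$, so $\psi_1\in\underline{\mathcal{C}}^1(B_R(\bar{y}))$ with $|\nabla\psi_1|(\bar{y})=L+\epsilon$. This smoothness is consistent with Lemma \ref{t:c1ex} applied to a $C^1$ extension to $\mathbf{R}_+$ of $\phi(r)=\sqrt{r}$ that agrees with $\sqrt{r}$ near $r=d(\bar{y},\hat{x})^2>0$. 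Since $|\nabla\psi_2|^{*}\equiv 0$ so that $H_{0}=H$, the subsolution condition for \eqref{e:lhj} (that is, for the equation with $H$ replaced by $H-c$) yields $H(\bar{y},L+\epsilon)\le c$, contradicting the choice of $L$. Therefore $\bar{y}=\hat{x}$.

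From $\bar{y}=\hat{x}$ we read off $v(y)-v(\hat{x})\le(L+\epsilon)\,d(y,\hat{x})$ for every $y\in X$; letting $\epsilon\downarrow 0$ and interchanging the roles of $y$ and $\hat{x}$ produces $|v(y)-v(\hat{x})|\le L\,d(y,\hat{x})$, with $L$ independent of $v$. Hence every continuous subsolution---and a fortiori every continuous solution---of \eqref{e:lhj} is $L$-Lipschitz, proving equi-Lipschitz continuity. The main technical obstacle is verifying that the rescaled distance function $\psi_1$ is admissible as a Gangbo-\'{S}wi\c{e}ch test in the class $\underline{\mathcal{C}}^1$ near $\bar{y}$; this relies on the geodesic structure of $X$ to realize the slope $1$ together with Lemma \ref{t:c1ex}, after which the subsolution inequality clashes directly with coercivity.
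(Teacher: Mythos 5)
Your argument is correct, and it rests on the same two pillars as the paper's proof: coercivity (A3) together with (A4) supplies a slope threshold $L$ beyond which $H(x,\cdot)$ exceeds $c$, and one tests the subsolution against a cone centered at the base point; both versions in fact prove the stronger statement that all continuous \emph{subsolutions} are uniformly Lipschitz, and both use attainment of the maximum over $X$, i.e.\ the compactness of $X$, which is the standing hypothesis of Section \ref{s:longtime} even though it is not restated in the proposition. The genuine difference lies in how the singularity of $d(\cdot,\hat{x})$ at its vertex is handled. The paper regularizes the cone to $2L\sqrt{d(\cdot,y)^2+\varepsilon^2}$, which is an admissible $\underline{\mathcal{C}}^1$ test function on all of $X$ by Lemma \ref{t:c1ex}; the subsolution inequality at the maximum point $x_\varepsilon$ then forces the variable slope $2L d(x_\varepsilon,y)/\sqrt{d(x_\varepsilon,y)^2+\varepsilon^2}$ to be at most $L$, hence $x_\varepsilon\to y$, and the Lipschitz bound follows by letting $\varepsilon\to 0$. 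You instead keep the exact cone and rule out by contradiction that its maximum sits anywhere but the vertex, using that away from $\hat{x}$ the distance function is locally an admissible test function of constant slope $1$; your device of replacing $\sqrt{r}$ by a $C^1$ modification near $r=0$ that agrees with $\sqrt{r}$ near $r=d(\bar{y},\hat{x})^2$ is exactly what is needed to produce a globally admissible test function (as the literal definition of a test function requires membership in $\underline{\mathcal{C}}^1$ of the whole domain) while preserving the zero local maximum, so the appeal to Lemma \ref{t:c1ex} is legitimate. Your route yields the slightly sharper constant $L$ in place of $2L$ and dispenses with the limiting argument in $\varepsilon$; the paper's route avoids the case analysis at the vertex and the geodesic computation of the lower slope of the distance function.
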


\begin{proof}
Note that there exists a constant $L \in \mathbf{R}_+$ such that $H(x, p) \ge c$ for all $x \in X$ and $p \ge L$ by (A3).
Fix a real-valued continuous solution $v$.
Consider the function $v(x)-v(y)-2L\sqrt{d(x,y)^2+\varepsilon^2}$ for $x, y \in X$ and take its maximum point $x_\varepsilon$ with respect to $x$ for each $\varepsilon > 0$.
Note that $x \mapsto v(y)+2L\sqrt{d(x, y)^2+\varepsilon^2}$ is of $\underline{\mathcal{C}}^1(X)$ by Lemma \ref{t:c1ex}.
Hence, we see that
$$
H_0\left(x_\varepsilon, \frac{2L d(x_\varepsilon, y)}{\sqrt{d(x_\varepsilon, y)^2+\varepsilon^2}}\right)
= H\left(x_\varepsilon, \frac{2L d(x_\varepsilon, y)}{\sqrt{d(x_\varepsilon, y)^2+\varepsilon^2}}\right) \le c.
$$
Therefore, we see that $2L d(x_\varepsilon, y)/\sqrt{d(x_\varepsilon, y)^2+\varepsilon^2} \le L$
and so $x_\varepsilon \to y$ as $\varepsilon \to 0$.
Now, for each $x, y \in X$, we have
$$
v(x)-v(y)-2L\sqrt{d(x,y)^2+\varepsilon^2}
\le v(x_\varepsilon)-v(y)-2L\sqrt{d(x_\varepsilon,y)^2+\varepsilon^2}
$$
Sending $\varepsilon \to 0$ yields
$$
v(x)-v(y)-2L d(x,y) \le 0,
$$
which means that all subsolutions of \eqref{e:lhj} is $2L$-Lipschitz continuous.
\end{proof}

\begin{theorem}[Existence of a solution of \eqref{e:lhj}]
\label{t:elhj}
Assume (A1), (A3) and (A4).
Then, there exists at least one Lipschitz continuous solution of \eqref{e:lhj} whenever $A$ is non-empty.
\end{theorem}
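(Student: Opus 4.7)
The plan is to run the Perron construction directly for the stationary equation, following the Fathi--Siconolfi template cited in the introduction. Since $A$ is non-empty, I fix $\hat{x} \in A$ and set
$$\mathcal{S} := \{ w : w \text{ is a metric viscosity subsolution of } \eqref{e:lhj}, \ w(\hat{x}) \le 0 \}, \qquad v(x) := \sup_{w \in \mathcal{S}} w(x).$$
The candidate solution is $v$, and the analysis parallels the proof of Proposition \ref{t:perron}, adapted to the stationary setting through the obvious stationary analogues of the stability machinery developed in Section \ref{s:stab}.

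First I would check that $\mathcal{S}$ is non-empty and that $v$ is finite, Lipschitz, and a subsolution. By the stationary version of Proposition \ref{t:ss} applied with $\psi_1 \equiv \psi_2 \equiv 0$, the zero function is a subsolution of \eqref{e:lhj} because $H(x,0) \le c$, so $0 \in \mathcal{S}$. Proposition \ref{t:liplhj} implies every $w \in \mathcal{S}$ is $2L$-Lipschitz, hence $w(x) \le 2L\,d(x,\hat{x}) \le 2L\,\mathrm{diam}(X) < \infty$ by compactness of $X$. Thus $v$ is real-valued, $2L$-Lipschitz (in particular continuous), and satisfies $v(\hat{x}) \le 0$. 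The stationary analogue of Corollary \ref{t:sup} then shows $v$ is itself a metric viscosity subsolution of \eqref{e:lhj}.

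The core step is to show $v$ is also a supersolution, which splits into two cases. At the fixed point $\hat{x} \in A$ the supersolution inequality holds automatically: by evenness and convexity in $p$, $H(\hat{x},p) \ge H(\hat{x},0) = c$ for every $p \in \mathbf{R}$, so $H_r(\hat{x},p) \ge c$ for any $r \ge 0$ and $p$. At any other point $\bar{x} \ne \hat{x}$, suppose for contradiction that the supersolution condition failed for some test pair $\psi = \psi_1 + \psi_2$ at scale $R$. Choosing $R$ smaller if necessary so that $\overline{B}_R(\bar{x})$ misses $\hat{x}$, I would perturb to $\tilde{\psi} := \psi_1 + \psi_2 - d(\bar{x},\cdot)^2$, which by continuity of $F_r$ and monotonicity of $r \mapsto H_r$ becomes a strict subsolution on $B_R(\bar{x})$. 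Setting $m := R^2/4$ and
$$w(x) := \begin{cases} \max\{\tilde{\psi}(x) + m/2,\, v(x)\} & \text{if } x \in B_R(\bar{x}), \\ v(x) & \text{otherwise}, \end{cases}$$
locality, composition, and the sup principle glue $w$ into a subsolution on all of $X$. Since $\hat{x} \notin B_R(\bar{x})$, we have $w(\hat{x}) = v(\hat{x}) \le 0$, so $w \in \mathcal{S}$. Choosing a sequence $z_k \to \bar{x}$ along which $v(z_k) \to v_*(\bar{x}) = \psi(\bar{x})$ gives $w(z_k) \ge \tilde{\psi}(z_k) + m/2 > v(z_k)$ for $k$ large, contradicting $v = \sup \mathcal{S}$.

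The main obstacle I anticipate is bookkeeping rather than conceptual: the argument relies on stationary versions of Proposition \ref{t:ss}, Corollary \ref{t:sup}, and Proposition \ref{t:loc}, which are not stated explicitly for \eqref{e:gshj} but whose proofs transpose mechanically from the evolutionary case. The only genuinely new ingredient is the automatic supersolution property on $A$ coming from evenness plus convexity, together with the observation that arranging $\overline{B}_R(\bar{x})$ to avoid $\hat{x}$ preserves the normalization $w(\hat{x}) \le 0$ throughout the bump construction.
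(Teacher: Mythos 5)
Your proposal is correct and follows essentially the same route as the paper: normalize subsolutions at a point $\hat{x}\in A$, use Proposition \ref{t:liplhj} for the equi-Lipschitz bound, take the supremum to get a subsolution, run the Perron bump to get the supersolution property away from $\hat{x}$, and observe that at $\hat{x}\in A$ the supersolution inequality is automatic since $H(\hat{x},p)\ge H(\hat{x},0)=c$ by evenness and convexity. The only cosmetic differences are that the paper packages the bump argument as a citation of Proposition \ref{t:perron} applied on $X\setminus\{\hat{x}\}$ and normalizes by $w(\hat{x})=0$ rather than $w(\hat{x})\le 0$.
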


\begin{proof}
Define
$$
S(x, y) := \sup\{ w(x) \mid \text{$w \in \mathit{C}(X)$ is a subsolution of \eqref{e:lhj} with $w(y) = 0$} \}.
$$
Note that the constant $w \equiv 0$ is a subsolution of \eqref{e:lhj}.
Also Proposition \ref{t:liplhj} ensures that the solutions of \eqref{e:lhj} are equi-Lipschitz continuous and hence $v := S(\cdot, y)$ is a Lipschitz continuous function on $X$.
Now, Corollary \ref{t:sup} implies that $v$ is a subsolution of \eqref{e:lhj} in $X$
while Proposition \ref{t:perron} shows that $v$ is a supersolution of \eqref{e:lhj} in $X\setminus\{ y \}$.
Since $H(x, p) \ge H(x, 0) = c$ for $x \in A$,
we see that $v = S(\cdot, y)$ is a solution for every $y \in A \neq \emptyset$.
\end{proof}

\begin{theorem}[Comparison principle for \eqref{e:gshj}]
\label{t:scp}
Let $U$ be an open subset of $X$ such that $\overline{U}$ is compact.
Assume (A1), (A2) and that $H(x, 0) < 0$ for all $x \in U$.
Let $u$ be a subsolution and $v$ be a supersolution of \eqref{e:gshj} such that $u^* < +\infty$ and $v_* > -\infty$.
If $u^* \le v_*$ on $\partial U$,
then $u^* \le v_*$ in $U$.
\end{theorem}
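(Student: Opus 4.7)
The plan is to argue by contradiction via a doubling-of-variables argument, upgrading the subsolution to a strict subsolution via a scaling trick that exploits $H(\cdot, 0) < 0$.

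Suppose $M := \sup_{\overline{U}}(u^* - v_*) > 0$. Upper semicontinuity of $u^* - v_*$ together with compactness of $\overline U$ gives that the supremum is attained at some $\bar z \in \overline U$, and the boundary hypothesis $u^* \le v_*$ on $\partial U$ forces $\bar z \in U$. By continuity of $H$ there are $\delta > 0$ and an open neighborhood $V$ of $\bar z$ with $\overline V \subset U$ such that $H(x, 0) \le -\delta$ on $\overline V$. After adding a common large constant to $u^*$ and $v_*$ (which preserves the sub/supersolution property and does not change $M$), I may assume $v_* \ge 0$ on $\partial U$. Then for $\lambda \in (0, 1)$ sufficiently close to $1$, the function $\lambda u^*$ satisfies $\lambda u^* \le \lambda v_* \le v_*$ on $\partial U$, while $\sup_{\overline U}(\lambda u^* - v_*)$ stays strictly positive and is attained at some $\bar z_\lambda \in V$. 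Using the convex inequality $H(x, \lambda p) \le \lambda H(x, p) + (1-\lambda) H(x, 0)$ together with the subsolution condition for $u^*$ applied to the rescaled test functions $\psi/\lambda$, in the spirit of Proposition \ref{t:cvu}, one checks that $\lambda u^*$ is a metric viscosity subsolution of $H(x, |Du|) \le -(1-\lambda)\delta$ on $V$.

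Next I would apply the doubling of variables. On $\overline U \times \overline U$, define
\[
\Phi_\varepsilon(x, y) := \lambda u^*(x) - v_*(y) - \frac{d(x, y)^2}{2\varepsilon},
\]
which attains a maximum at some $(x_\varepsilon, y_\varepsilon)$. Standard estimates give $d(x_\varepsilon, y_\varepsilon)^2/\varepsilon \to 0$ and, along a subsequence, $(x_\varepsilon, y_\varepsilon) \to (\bar z_\lambda, \bar z_\lambda) \in V \times V$. By Lemma \ref{t:c1ex}, the test function $x \mapsto d(x, y_\varepsilon)^2/(2\varepsilon)$ lies in $\underline{\mathcal{C}}^1(V)$ and $y \mapsto -d(x_\varepsilon, y)^2/(2\varepsilon)$ in $\overline{\mathcal{C}}^1(V)$, both with local slope $p_\varepsilon := d(x_\varepsilon, y_\varepsilon)/\varepsilon$ at the relevant point. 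Taking $\psi_2 = 0$, the strict subsolution condition for $\lambda u^*$ at $x_\varepsilon$ and the supersolution condition for $v$ at $y_\varepsilon$ yield
\[
H(x_\varepsilon, p_\varepsilon) \le -(1-\lambda)\delta \qquad \text{and} \qquad H(y_\varepsilon, p_\varepsilon) \ge 0.
\]

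The hard part will be the limit $\varepsilon \to 0$: in the easy case $p_\varepsilon \to p_0 \in [0, \infty)$ along a subsequence, continuity of $H$ directly gives $H(\bar z_\lambda, p_0) \le -(1-\lambda)\delta < 0 \le H(\bar z_\lambda, p_0)$, a contradiction. When $p_\varepsilon \to \infty$, I plan to exploit that $H(x, \cdot)$ is non-decreasing on $[0, \infty)$ (an immediate consequence of evenness and convexity): the subsolution bound then propagates as $H(x_\varepsilon, P) \le -(1-\lambda)\delta$ for every $P \in [0, p_\varepsilon]$, so passing to the limit yields $H(\bar z_\lambda, P) \le -(1-\lambda)\delta$ for all $P \ge 0$. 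Pointwise continuity of $H$ combined with the monotonicity of $H(y_\varepsilon, \cdot)$ then contradicts the supersolution inequality $H(y_\varepsilon, p_\varepsilon) \ge 0$, and once the contradiction is established we conclude $u^* \le v_*$ throughout $U$.
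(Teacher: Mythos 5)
Your overall strategy is essentially the paper's: double the variables with the penalization $d(x,y)^2/(2\varepsilon)$ and exploit convexity of $H$ in $p$ through a factor $\lambda<1$ (the paper's $\theta$). The only structural difference is that you apply the convexity inequality up front, turning $\lambda u^*$ into a strict subsolution near the maximum point, whereas the paper keeps $\theta u^*$ as is and combines the sub- and supersolution inequalities with convexity at the very end; both routes lead to the same pair of inequalities $H(x_\varepsilon,p_\varepsilon)\le-(1-\lambda)\delta$ and $H(y_\varepsilon,p_\varepsilon)\ge 0$ (up to the harmless discrepancy between the slopes $p_\varepsilon$ and $p_\varepsilon/\theta$), and your treatment of the case where $p_\varepsilon$ stays bounded is correct.

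The genuine gap is your handling of the case $p_\varepsilon\to\infty$. Monotonicity of $H(x,\cdot)$ on $[0,\infty)$ does let you pass the subsolution bound to the limit and conclude $H(\hat z,P)\le-(1-\lambda)\delta$ for every $P\ge 0$, where $\hat z$ is the common limit of $x_\varepsilon$ and $y_\varepsilon$; but this does not contradict $H(y_\varepsilon,p_\varepsilon)\ge 0$. To get a contradiction you would need to transfer the supersolution inequality from $y_\varepsilon$ to $\hat z$ at the slope $p_\varepsilon$, and pointwise continuity of $H$ gives no control of $H(y_\varepsilon,p_\varepsilon)-H(\hat z,p_\varepsilon)$ when $p_\varepsilon\to\infty$. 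Concretely, $H(x,p)=-1+d(x,\hat z)\,|p|$ is continuous, even and convex in $p$, satisfies $H(\hat z,P)=-1$ for all $P$, and yet $H(y_\varepsilon,p_\varepsilon)\ge 0$ whenever $p_\varepsilon\ge 1/d(y_\varepsilon,\hat z)$; so the step ``monotonicity plus pointwise continuity contradicts the supersolution inequality'' fails as stated. What actually closes this case is an a priori bound on $p_\varepsilon$: under the coercivity assumption (A3), which holds in every application of Theorem \ref{t:scp} in this paper, the inequality $H(x_\varepsilon,p_\varepsilon)\le 0$ forces $p_\varepsilon\le L$, so only your easy case occurs; without (A3) one needs a structure condition such as $|H(x,p)-H(y,p)|\le\omega\bigl(d(x,y)(1+|p|)\bigr)$ combined with the standard estimate $d(x_\varepsilon,y_\varepsilon)\,p_\varepsilon\to 0$. (The paper's own passage to the limit in the difference $H(y_\varepsilon,\cdot)-H(x_\varepsilon,\cdot)$ quietly relies on the same kind of control.) A secondary, fixable point: you fix the neighborhood $V$ of a single maximizer $\bar z$ of $u^*-v_*$ before introducing $\lambda$, but the maximizers of $\lambda u^*-v_*$ and the limits of $(x_\varepsilon,y_\varepsilon)$ are only guaranteed to approach the \emph{set} of maximizers of $u^*-v_*$; you should take $V$ to be a neighborhood of that whole compact set, which lies in $U$ by the boundary hypothesis, so that $H(\cdot,0)\le-\delta$ there by continuity.
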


\begin{proof}
First note that we may assume $u^*(x_0) \ne -\infty$ and $v_*(x_0) \ne +\infty$ at some $x_0 \in U$;
otherwise the conclusion holds.
Fix $\theta \in (0, 1)$ and consider the upper semicontinuous function defined by
$$
\Phi(x, y) := \theta u^*(x)-v_*(y)-\frac{1}{2\varepsilon}d(x, y)^2
$$
for $\varepsilon > 0$.
Thanks to the compactness of $\overline{U}$,
we are able to take a maximum point $(x_\varepsilon, y_\varepsilon) \in \overline{U}\times\overline{U}$ of $\Phi$.
It follows from $\Phi(x_\varepsilon, y_\varepsilon) \ge \Phi(x_0, x_0)$ that
$$
\begin{aligned}
\frac{1}{2\varepsilon}d(x_\varepsilon, y_\varepsilon)^2
&\leq \theta u^*(x_\varepsilon)-v_*(y_\varepsilon)-\theta u^*(x_0)+v_*(x_0) \\
&\leq \theta\sup u^*-\inf u_*-\theta u^*(x_0)+v_*(x_0) < +\infty.
\end{aligned}
$$
Hence, $d(x_\varepsilon, y_\varepsilon) \to 0$
and so we may assume that $x_\varepsilon$ and $y_\varepsilon$ converge to a same point $\bar{x} \in \overline{U}$ by taking a subsequence.
% since $\overline{U}$ is COMPACT.
Let us consider the case when $\bar{x} \in U$.
Then, since $u$ and $v$ are a subsolution and a supersolution,
$$
\begin{aligned}
H(x_\varepsilon, \frac{1}{\theta\varepsilon}d(x_\varepsilon, y_\varepsilon)) &\le 0, \\
H(y_\varepsilon, \frac{1}{\varepsilon}d(x_\varepsilon, y_\varepsilon)) &\ge 0.
\end{aligned}
$$
By the convexity of $H$ the second inequality yields
$$
(1-\theta)H(y_\varepsilon, 0)+\theta H(y_\varepsilon, \frac{1}{\theta\varepsilon}d(x_\varepsilon, y_\varepsilon)) \ge 0
$$
Hence,
$$
(1-\theta)H(y_\varepsilon, 0)+\theta H(y_\varepsilon, \frac{1}{\theta\varepsilon}d(x_\varepsilon, y_\varepsilon))-\theta H(x_\varepsilon, \frac{1}{\theta\varepsilon}d(x_\varepsilon, y_\varepsilon)) \ge 0.
$$
Sending $\varepsilon \to 0$ yields $(1-\theta)H(\bar{x}, 0) \ge 0$.
Since $H(\bar{x}, 0) < 0$ and $\theta < 1$, we obtain a contradiction.
Therefore, $\bar{x} \in \partial U$.
We now observe that
$$
\theta u^*(x_\varepsilon)-v_*(y_\varepsilon) \ge \Phi(x_\varepsilon, y_\varepsilon) \ge \sup_{x \in U}\Phi(x, x) = \sup_U(\theta u^*-v_*).
$$
Hence, we see that $\sup_U(\theta u^*-v_*) \le (\theta u^*-v_*)(\bar{x}) \le \sup_{\partial U}(\theta u^*-v_*)$.
Sending $k \to 1$ implies $\sup_U(u^*-v_*) \le \sup_{\partial U}(u^*-v_*)$.
\end{proof}

\begin{corollary}[Comparison principle for \eqref{e:lhj}]
\label{t:lcp}
Assume that $X$ is compact.
Let $u$ be a subsolution and $v$ be a supersolution of \eqref{e:lhj} such that $u^* < +\infty$ and $v_* > -\infty$.
If $u^* \le v_*$ on $A$, then $u^* \le v_*$ on $X$.
\end{corollary}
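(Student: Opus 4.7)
My plan is to deduce the corollary directly from Theorem \ref{t:scp} applied on the open set $U := X \setminus A$, after normalizing the equation so that the right-hand side is zero.

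First I would set $\tilde H(x,p) := H(x,p) - c$, so that \eqref{e:lhj} becomes exactly an equation of the form \eqref{e:gshj} with Hamiltonian $\tilde H$. The hypotheses (A1) and (A2) on $H$ transfer immediately to $\tilde H$: continuity and convexity in $p$ are preserved under subtraction of a constant. Since $H(\cdot,0)$ is continuous on the compact space $X$, the set $A = \{H(\cdot,0) = c\}$ is closed (and nonempty, because $c$ is attained as a maximum), hence $U = X \setminus A$ is open and $\overline{U}$ is compact. Moreover, for every $x \in U$ we have $H(x,0) < c$ by the very definition of $A$ as the set of maximizers, so $\tilde H(x,0) < 0$ on $U$; this is the remaining structural hypothesis needed in Theorem \ref{t:scp}.

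Next I would verify the boundary condition. Since $U$ is open, $\partial U = \overline{U} \setminus U$, so any $x \in \partial U$ lies in $\overline{U}$ but not in $U$; hence $x \in A$. Thus the standing assumption $u^* \le v_*$ on $A$ gives, in particular, $u^* \le v_*$ on $\partial U$. Since $u$ is a subsolution and $v$ a supersolution of \eqref{e:lhj} on $X$, their restrictions to $U$ are a subsolution and supersolution of the normalized equation $\tilde H(x,|Dv|) = 0$ in $U$. Theorem \ref{t:scp} therefore yields $u^* \le v_*$ throughout $U$.

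Combining this with the assumed inequality on $A$, we obtain $u^* \le v_*$ on $U \cup A = X$, which is exactly the conclusion. I do not foresee a serious obstacle: the only mildly delicate points are checking $\partial U \subset A$ and the strict sign $\tilde H(\cdot,0) < 0$ on $U$, both of which follow directly from the definition of $A$ and the compactness of $X$ (which guarantees $A \ne \emptyset$ and $\overline{U}$ compact). No further tools beyond Theorem \ref{t:scp} are needed.
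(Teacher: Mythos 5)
Your proposal is correct and is essentially the paper's own argument: the paper also sets $U := X\setminus A$, observes $H(x,0)-c<0$ there, and applies Theorem \ref{t:scp}. Your write-up just makes explicit the routine checks (closedness of $A$, $\partial U\subset A$, compactness of $\overline{U}$) that the paper leaves implicit.
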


\begin{proof}
It follows from the definition of $A$ that $H(x, 0)-c < 0$ for all $x \in U := X\setminus A$.
Therefore, Theorem \ref{t:scp} implies $u^* \le v_*$ in $X\setminus A$.
\end{proof}

We will also require a comparison principle for the evolution equation \eqref{e:hj}.

% \begin{theorem}[Comparison principle for \eqref{e:ghj}]
% % \label{t:cp}
% Let $F = F(t, x, p, q) \in \mathit{C}((0, \infty)\times X\times\mathbf{R}\times\mathbf{R})$ satisfy the following assumptions:
% \begin{itemize}
% \item
% There exists some $q > 0$ such that $q \mapsto F(t, x, p, q)-\lambda q$ is non-decreasing for each $(t, x, p)$.
% \item
% $F = F(t, x, p, q)$ does not depend on $t$.
% \item
% $|F(t, x, p, q)-F(t, y, p, q)| \le \omega(p d(x, y))$ for some modulus $\omega$.
% \end{itemize}
% Let $u$ be a subsolution and $v$ be a supersolution of \eqref{e:ghj} such that $u^* < +\infty$ and $v_* > -\infty$.
% If $u^*|_{t = 0} \le v_*|_{t = 0}$, then $u^* \le v_*$ on $(0, \infty)\times X$.
% \end{theorem}

\begin{theorem}[Comparison principle for \eqref{e:hj}]
\label{t:cp}
Assume (A1) and that $X$ is compact.
Let $u$ be a subsolution and $v$ be a supersolution of \eqref{e:hj} such that $u^* < +\infty$ and $v_* > -\infty$.
If $u^*|_{t = 0} \le v_*|_{t = 0}$, then $u^* \le v_*$ on $(0, \infty)\times X$.
\end{theorem}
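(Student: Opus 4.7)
The plan is to adapt the classical doubling-of-variables argument to the Gangbo-\'{S}wi\c{e}ch framework, using Lemma \ref{t:c1ex} to legalize the squared-distance test function on both sides. First I would reduce to a bounded time horizon: fix $T>0$ and show $u^* \le v_*$ on $(0,T]\times X$; the conclusion will then follow by letting $T\to\infty$. Arguing by contradiction, assume
$$
M := \sup_{(0,T)\times X}\Bigl(u^*(t,x) - v_*(t,x) - \tfrac{\eta}{T-t}\Bigr) > 0
$$
for some small $\eta>0$. The term $\eta/(T-t)$ prevents accumulation at $t=T$, while the hypothesis $u^*|_{t=0}\le v_*|_{t=0}$ rules out $t\to 0$, so the extremum is realized in the interior.

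I then introduce the doubled functional
$$
\Phi_\varepsilon(t,x,s,y) := u^*(t,x) - v_*(s,y) - \frac{d(x,y)^2+(t-s)^2}{2\varepsilon} - \frac{\eta}{T-t}.
$$
Compactness of $X$ gives a maximizer $(t_\varepsilon,x_\varepsilon,s_\varepsilon,y_\varepsilon)$. Comparing $\Phi_\varepsilon$ at this maximizer with its values on the diagonal $\{(t,x,t,x)\}$ produces the standard estimates: the penalty terms stay bounded, along a subsequence $(t_\varepsilon,x_\varepsilon),(s_\varepsilon,y_\varepsilon)\to(\bar t,\bar x)$ with $\bar t\in(0,T)$, and $(d(x_\varepsilon,y_\varepsilon)^2+(t_\varepsilon-s_\varepsilon)^2)/\varepsilon\to 0$. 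Next I test the subsolution inequality at $(t_\varepsilon,x_\varepsilon)$ with
$$
\psi(t,x) = \tfrac{d(x,y_\varepsilon)^2}{2\varepsilon}+\tfrac{(t-s_\varepsilon)^2}{2\varepsilon}+\tfrac{\eta}{T-t};
$$
the spatial piece belongs to $\underline{\mathcal{C}}^1$ by Lemma \ref{t:c1ex}, and the time-only terms can be distributed between $\psi_1$ and $\psi_2$ as convenient. This yields
$$
\frac{t_\varepsilon-s_\varepsilon}{\varepsilon} + \frac{\eta}{(T-t_\varepsilon)^2} + H\!\left(x_\varepsilon,\tfrac{d(x_\varepsilon,y_\varepsilon)}{\varepsilon}\right) \le 0.
$$
Symmetrically, I test the supersolution at $(s_\varepsilon,y_\varepsilon)$ against $-d(x_\varepsilon,y)^2/(2\varepsilon)-(t_\varepsilon-s)^2/(2\varepsilon)$; the identity $|\nabla^-(-\phi)|=|\nabla^+\phi|$ combined with Lemma \ref{t:c1ex} places this in $\overline{\mathcal{C}}^1$, giving
$$
\frac{t_\varepsilon-s_\varepsilon}{\varepsilon} + H\!\left(y_\varepsilon,\tfrac{d(x_\varepsilon,y_\varepsilon)}{\varepsilon}\right) \ge 0.
$$
Subtraction yields $\eta/(T-t_\varepsilon)^2 \le H(y_\varepsilon,p_\varepsilon)-H(x_\varepsilon,p_\varepsilon)$ with $p_\varepsilon := d(x_\varepsilon,y_\varepsilon)/\varepsilon$.

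The main obstacle is forcing the right-hand side to vanish in the limit $\varepsilon\to0$. Although $x_\varepsilon,y_\varepsilon\to\bar x$, the diagonal estimate only provides $d(x_\varepsilon,y_\varepsilon)\le C\sqrt\varepsilon$, so a priori $p_\varepsilon$ may grow like $1/\sqrt\varepsilon$, and the continuity of $H$ in assumption (A1) gives uniform continuity in $x$ only on bounded $p$-sets. The closing step therefore requires promoting the boundedness of the penalty $d^2/\varepsilon$ to genuine compactness of $p_\varepsilon$, or invoking a refined Ekeland-type variational argument (in the spirit of Lemma \ref{t:ekeland} and the proof of Lemma \ref{t:stab}) to replace the maximizer by one at which $p_\varepsilon$ stays in a compact range. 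Once the doubled gradient is tamed, continuity of $H$ and $x_\varepsilon,y_\varepsilon\to\bar x$ drive the right-hand side to $0$, contradicting $\eta/(T-\bar t)^2>0$; sending $\eta\to 0$ and $T\to\infty$ finishes the comparison.
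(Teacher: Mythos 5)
The paper gives no proof of Theorem \ref{t:cp}; it defers entirely to the argument of \cite[Proposition 3.3]{GS14b}. Your sketch reproduces the right skeleton of that argument --- doubling of variables with the penalization $d(x,y)^2/(2\varepsilon)+(t-s)^2/(2\varepsilon)$, the $\eta/(T-t)$ barrier, and the correct use of Lemma \ref{t:c1ex} together with $|\nabla^+(-\phi)|=|\nabla^-\phi|$ to make the squared distance an admissible $\underline{\mathcal{C}}^1$ (resp.\ $\overline{\mathcal{C}}^1$) test function on each side. But the proof is not complete: the step you yourself label ``the main obstacle'' is exactly the step that constitutes the proof, and you leave it as a wish (``once the doubled gradient is tamed\dots''). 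As written, the argument terminates at $\eta/(T-t_\varepsilon)^2 \le H(y_\varepsilon,p_\varepsilon)-H(x_\varepsilon,p_\varepsilon)$ with no mechanism to kill the right-hand side, so nothing has been proved.

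There are two standard ways to close this, and you should commit to one. (i) The route of \cite{GS14b} does \emph{not} require $p_\varepsilon$ to stay bounded: it uses a structural modulus of the form $|H(x,p)-H(y,p)|\le\omega\bigl(d(x,y)(1+|p|)\bigr)$, and then the diagonal estimate $d(x_\varepsilon,y_\varepsilon)^2/\varepsilon\to 0$ gives $d(x_\varepsilon,y_\varepsilon)(1+p_\varepsilon)=d(x_\varepsilon,y_\varepsilon)+d(x_\varepsilon,y_\varepsilon)^2/\varepsilon\to 0$ directly; this is strictly stronger than the literal (A1) of the paper, so if you follow this route you must state the extra hypothesis. (ii) Alternatively, if $u$ is Lipschitz in $x$ with constant $L$ (which is the situation in which Theorem \ref{t:cp} is actually applied, via (A3)--(A4) and the construction before Theorem \ref{t:ltb}), then maximality of $x\mapsto u^*(t_\varepsilon,x)-d(x,y_\varepsilon)^2/(2\varepsilon)$ compared at $x_\varepsilon$ and $y_\varepsilon$ gives $d(x_\varepsilon,y_\varepsilon)^2/(2\varepsilon)\le u^*(t_\varepsilon,x_\varepsilon)-u^*(t_\varepsilon,y_\varepsilon)\le L\,d(x_\varepsilon,y_\varepsilon)$, hence $p_\varepsilon\le 2L$; then plain continuity of $H$ on the compact set $X\times[0,2L]$ suffices. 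Your appeal to ``a refined Ekeland-type argument'' does not substitute for either: Ekeland's principle is needed in Lemma \ref{t:stab} only because maxima may fail to exist, whereas here compactness of $X$ already yields exact maximizers, and no variational perturbation will bound $p_\varepsilon$ under (A1) alone. Until one of (i) or (ii) is carried out, the proof has a genuine gap.
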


One is able to prove this theorem with the same idea as in \cite[Proof of Proposition 3.3]{GS14b}
and so we omit the proof.

Before starting the proof of Theorem \ref{t:ltb},
let us explain that the initial value problem \eqref{e:hj}, \eqref{e:ic} admits a unique Lipschitz continuous solution.
We will construct a solution by Perron method
while the uniqueness is a direct consequence of the comparison principle (Theorem \ref{t:cp}).
Let $\Lip[u_0]$ denote the Lipschitz constant of $u_0$
and set $K = \max_{x \in X}|H(x, \Lip[u_0])|$.
First note that $\overline{u}(t, x) := u_0(x)+K t$ and $\underline{u}(t, x) := u_0(x)-K t$ are a Lipschitz continuous supersolution and subsolution on $[0, \infty)\times X$, respectively.
We then can construct a continuous solution $u$ such that $\underline{u} \le \overline{u}$ by using Proposition \ref{t:perron} and Theorem \ref{t:cp}.
Take a constant $L \in \mathbf{R}_+$ such that $H(x, p) \ge c$ for all $x \in X$ and $p \ge L$.
Then, we see that $|u(t, x)-u(s, y)| \le K|t-s|+L d(x, y)$ by a similar argument to the proof of Proposition{t:liplhj}.
Actually, this is a standard argument and we refer the reader to \cite{GH13}.

We are now able to prove the main theorem stated at the top of this section.

\begin{proof}[Proof of Theorem \ref{t:ltb}]
Take the solution $v_0$ of \eqref{e:lhj} in Theorem \ref{t:elhj}.
Noting that $u_0$ and $v_0$ are bounded since $X$ is compact,
we are also able to see that $v_0-M \leq u_0 \leq v_0+M$ for some large $M > 0$.
Recall Propositions \ref{t:gshj} and \ref{t:cvu}, which imply that $v_0-c t\pm M$ are solutions of \eqref{e:hj}.
We then see by a comparison principle for \eqref{e:hj} (Theorem \ref{t:cp}) that $v_0-c t-M \leq u \leq v_0-c t+M$.
Thus, the upper and lower semi-limits
\begin{align*}
\overline{v}(x) &:= \sup_{(t_j, x_j) \to (\infty, x)}\limsup_j\{u(t_j, x_j)+c t_j\}, \\
\underline{v}(x) &:= \inf_{(t_j, x_j) \to (\infty, x)}\liminf_j\{u(t_j, x_j)+c t_j\}
\end{align*}
can be defined as a bounded function on $X$
since $v_0-M \leq \underline{v} \leq \overline{v} \leq v_0+M$.

We next note that Propositions \ref{t:cvt} and \ref{t:cvu} show the function
$$
w^\lambda(t, x) := u\left(\frac{t}{\lambda}, x\right)+c\frac{t}{\lambda}
$$
is a solution of
\begin{equation}\label{e:lambda}
% \label{e:hje}
\lambda\partial_t w^\lambda+H(x, |\mathit{D}w^\lambda|) = c \quad \text{in $(0, \infty)\times X$}
\end{equation}
for each $\lambda > 0$.
Since
\begin{align*}
\overline{v}(x) &= \sup_{(t_j, x_j, \lambda_j) \to (t, x, 0)}\limsup_j w^{\lambda_j}(t_j, x_j), \\
\underline{v}(x) &= \inf_{(t_j, x_j, \lambda_j) \to (t, x, 0)}\liminf_j w^{\lambda_j}(t_j, x_j)
\end{align*}
for all $t > 0$ and $x \in X$,
i.e.\ $\overline{v}$ and $\underline{v}$ are respectively nothing but the upper and lower semilimit of $w^\lambda$ as $\lambda \to 0$,
the stability result (Proposition \ref{t:stab}) and Proposition \ref{t:gshj} shows that $\overline{v}$ and $\underline{v}$ are a subsolution and a supersolution of \eqref{e:lhj}, respectively.

We next claim that $\overline{v} = \underline{v}$ on the set $A$.
Indeed, for each $x \in A$, $u(t, x)+c t$ converges to some $v(x)$ since $\partial_t u+c \leq 0$ and so it is a decreasing sequence.
We also obtain that $u$ is equi-Lipschitz continuous.
By connecting these two facts, we see that $\overline{v} \leq v \leq \underline{v}$ on $A$.

Finally, the comparison principle (Theorem \ref{t:lcp}) shows that $\overline{v} \leq \underline{v}$ on the whole space $X$.
Thus, we can conclude that $u(t, x)+c t$ converges to some function $v = \overline{v} = \underline{v}$ which is a solution of \eqref{e:lhj}.
\end{proof}

\begin{remark}
The convexity assumption (A2) is used only to guarantee a comparison principle holds for the stationary equation \eqref{e:lhj}.
It is possible to weaken the condition.
For instance, let us consider the specific Hamiltonian $H(x, p) = \sqrt{|p|}$,
which is not convex.
One easily see that the equation \eqref{e:lhj} is equivalent to $|D v| = c^2$.
Since a comparison principle for the convex Hamiltonian $|D v| = c^2$ implies a comparison principle $\sqrt{|D v|} = c$,
the same behavior of the solution must occur to the Hamiltonian $H(x, p) = \sqrt{|p|}$.
This scheme works for quasiconvex Hamiltonians $H(x, p) = h(|p|)+f(x)$ with $h \colon \mathbf{R}_+ \to \mathbf{R}$ such that $h(p)-\lambda p$ is non-decreasing for some $\lambda > 0$.
\end{remark}

%\todo{add calculation of the asymptotic profiles}

Let us introduce the functions $\phi_-, \phi_\infty \in C(X)$ by
\begin{align*}
\phi_-(x) &:= \inf_{t \ge 0}(u(x,t)+c t), \\
\phi_\infty(x) &:= \min\{ S(x,y)+\phi_{-}(y) \mid y \in A \},
\end{align*}
where the function $S$ is defined in the proof of Theorem \ref{t:elhj}.
%Note by Theorem \ref{t:elhj} and Corollary \ref{t:sup} the function $\phi_{\infty}$ is a solution of \eqref{e:lhj}.

\begin{proposition}\label{t:sols}
Let $u\in C([0,\infty)\times X)$ be a solution of \eqref{e:hj}.
Then $\phi_{-}$ and $\phi_{\infty}$ are solutions of \eqref{e:lhj}.
\end{proposition}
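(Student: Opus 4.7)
The plan is to identify both $\phi_-$ and $\phi_\infty$ with the large-time limit $v(x) := \lim_{t\to\infty}(u(t,x)+ct)$ furnished by Theorem \ref{t:ltb}---which is already a solution of \eqref{e:lhj}---and then to conclude via the comparison principle Corollary \ref{t:lcp}.

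First I would show that $\phi_-$ is a supersolution of \eqref{e:lhj}. Let $w(t,x) := u(t,x) + ct$, which satisfies $\partial_t w + H(x,|Dw|) = c$ on $(0,\infty) \times X$, and fix a test pair $\psi = \psi_1 + \psi_2$ with $\psi_1 \in \overline{\mathcal{C}}^1(X)$, $\psi_2 \in \mathcal{C}(X)$, such that $\phi_- - \psi$ attains a zero local minimum at $x_0$. Picking times $t_k \geq 0$ with $w(t_k, x_0) \to \phi_-(x_0)$ and extracting, assume $t_k \to t^* \in [0, \infty]$. If $t^* \in (0, \infty)$, the time-independent lift $\Psi(t, x) := \psi(x)$ belongs to $\overline{\mathcal{C}}^1((0, \infty) \times X)$ with $\partial_t \Psi \equiv 0$, and $w - \Psi$ attains a zero local minimum at the interior point $(t^*, x_0)$ because $w(t, x) \geq \phi_-(x) \geq \psi(x)$ nearby with equality at $(t^*, x_0)$; the supersolution property of $w$ then yields exactly $H_{|\nabla \psi_2|^*(x_0)}(x_0, |\nabla \psi_1|(x_0)) \geq c$. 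If $t^* = \infty$, the equi-Lipschitz estimates on $u$ (noted in the discussion preceding the proof of Theorem \ref{t:ltb}) combined with Arzel\`a--Ascoli give a locally uniform limit $u_\infty$ of the shifts $w(\cdot + t_k, \cdot)$ on $\mathbf{R} \times X$; Lemma \ref{t:stab} ensures $u_\infty$ solves the same evolutionary equation on $\mathbf{R} \times X$, and the same argument at the interior point $(0, x_0)$ delivers the inequality. The boundary case $t^* = 0$ I would avoid by first proving the property for the regularizations $\phi_-^{(n)}(x) := \inf_{t \geq 1/n} w(t, x)$, for which the above dichotomy applies with $t^* \geq 1/n > 0$, and then passing to the decreasing limit $n \to \infty$ using Corollary \ref{t:sup} for infima of supersolutions.

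Next I would identify $\phi_- = v$ on $X$. Trivially $\phi_- \leq v$. On $A$, the proof of Theorem \ref{t:ltb} showed that $t \mapsto u(t,x)+ct$ is non-increasing---because convexity and evenness of $H$ together with $H(x,0) = c$ on $A$ force $H(x,p) \geq c$ for all $p$ at $x \in A$, so that $\partial_t u + c \leq 0$ in the viscosity sense---and consequently $\phi_- = v$ on $A$. Applying Corollary \ref{t:lcp} to the subsolution $v$ and the supersolution $\phi_-$ with the equality $v = \phi_-$ on $A$ yields $v \leq \phi_-$ on $X$; combined with $\phi_- \leq v$ we conclude $\phi_- = v$, so $\phi_-$ is a solution of \eqref{e:lhj}.

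Finally, for $\phi_\infty$, substituting $\phi_- = v$ gives $\phi_\infty(x) = \min_{y \in A}[S(x, y) + v(y)]$. I would verify the supersolution property in two pieces: on $X \setminus A$, each $S(\cdot, y) + v(y)$ with $y \in A$ is a supersolution of \eqref{e:lhj} on $X \setminus \{y\} \supset X \setminus A$ by the proof of Theorem \ref{t:elhj}, so Corollary \ref{t:sup} produces a supersolution on $X \setminus A$; on $A$ itself the inequality $H_r(x, p) \geq c$ is automatic since $H(x, 0) = c$ combined with convexity and evenness in $p$ gives $H(x, q) \geq c$ for every $q$ at $x \in A$. The main obstacle is the subsolution property of $\phi_\infty$: the triangle inequality $S(x, y) \leq S(x, z) + S(z, y)$ (direct from the sup-definition of $S$) yields the dynamic programming bound $\phi_\infty(x) - \phi_\infty(z) \leq S(x, z)$ for all $x, z$, and one must translate this $S$-Lipschitz control into the viscosity subsolution inequality $H_{-|\nabla \psi_2|^*(x_0)}(x_0, |\nabla \psi_1|(x_0)) \leq c$ at an arbitrary test pair $\psi = \psi_1 + \psi_2$ with $\phi_\infty - \psi$ having a zero local maximum at $x_0$. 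Alternatively, I would identify $\phi_\infty$ with the Perron solution of \eqref{e:lhj} carrying boundary data $v|_A$ on $A$ via Proposition \ref{t:perron}, and then apply comparison once more to obtain $\phi_\infty = v$ and hence the conclusion.
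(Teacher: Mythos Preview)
Your approach differs substantially from the paper's. The paper gives a two-line argument: it notes that $u(\cdot,t)+ct$ is a supersolution of \eqref{e:lhj} and that, by the proof of Theorem \ref{t:elhj}, each $S(\cdot,y)$ with $y\in A$ is a \emph{solution} of \eqref{e:lhj} on all of $X$; it then invokes Corollary \ref{t:sup} together with Proposition \ref{t:perron} to conclude that the infima $\phi_-$ and $\phi_\infty$ are solutions. Your detour through the large-time limit $v$ of Theorem \ref{t:ltb} is legitimate for $\phi_-$: the supersolution argument with the case split on $t^*$ and the regularization $\phi_-^{(n)}$ is correct, and the identification $\phi_-=v$ via monotonicity on $A$ and Corollary \ref{t:lcp} goes through. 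This buys you a fully explicit proof at the cost of being much longer than the paper's.

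For $\phi_\infty$, however, there is a genuine gap. You establish the supersolution property but never close the subsolution property: the inequality $\phi_\infty(x)-\phi_\infty(z)\le S(x,z)$ does not by itself yield the viscosity test-function inequality, and the alternative Perron identification is only announced, not carried out. More seriously, your comparison-based strategy cannot close on its own. From $\phi_\infty$ supersolution, $v$ subsolution, and $\phi_\infty=v$ on $A$, Corollary \ref{t:lcp} only returns $v\le\phi_\infty$ on $X$---which you already have directly from $v(x)-v(y)\le S(x,y)$---and never the reverse inequality $\phi_\infty\le v$. For that you would need $\phi_\infty$ to be a subsolution, which is precisely what remains to be shown. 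The key simplification you overlook is the one the paper uses: for $y\in A$ the function $S(\cdot,y)$ is a solution on all of $X$ (not merely a supersolution on $X\setminus\{y\}$), so $\phi_\infty$ is an infimum of \emph{solutions}, and the combination of Corollary \ref{t:sup} and Proposition \ref{t:perron} then yields both the super- and the subsolution property at once.
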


\begin{proof}
Note that $u(x,t)+ct$ is supersolution of \eqref{e:lhj}. 
Thus, by Corollary \ref{t:sup} and Proposition \ref{t:perron}, which can be proved similarly for \eqref{e:lhj} as well, we immediately see that $\phi_{-}$ is solution of \eqref{e:lhj}.
Since $S$ is solution of \eqref{e:lhj} as we showed in the proof of Theorem \ref{t:elhj}, applying Corollary \ref{t:sup} and Proposition \ref{t:perron} again implies that $\phi_{\infty}$ is solution of \eqref{e:lhj}.
\end{proof}

\begin{theorem}[Asymptotic profile]
We have
\begin{equation}\label{e:profile}
\lim_{t\to\infty}(u(x,t)+ct) = \phi_{\infty}(x)\quad\text{for all $x \in X$}.
\end{equation}
\end{theorem}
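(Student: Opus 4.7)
The plan is to leverage Theorem \ref{t:ltb}, which already guarantees that $u(x,t)+ct$ converges locally uniformly to a solution $v$ of \eqref{e:lhj}; in particular pointwise convergence holds. The task thus reduces to identifying $v$ with $\phi_\infty$. Since $\phi_\infty$ is itself a solution of \eqref{e:lhj} by Proposition \ref{t:sols}, and both functions are Lipschitz continuous by Proposition \ref{t:liplhj}, the comparison principle Corollary \ref{t:lcp} further reduces the problem to showing $v=\phi_\infty$ on the set $A$.

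First I would identify $v$ with $\phi_-$ on $A$. For every $x \in A$, the convexity and evenness of $H$ force $H(x,|Du|) \ge H(x,0)=c$, whence $\partial_t u + c \le 0$ and the map $t \mapsto u(x,t)+ct$ is non-increasing; this is exactly the monotonicity already exploited in the proof of Theorem \ref{t:ltb}. The pointwise limit therefore coincides with the pointwise infimum, giving $v(x) = \phi_-(x)$ for every $x \in A$.

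Next I would identify $\phi_-$ with $\phi_\infty$ on $A$. By Proposition \ref{t:sols}, $\phi_-$ is a subsolution of \eqref{e:lhj}, so for every $y \in X$ the translate $\phi_-(\cdot)-\phi_-(y)$ is a subsolution vanishing at $y$, and hence is bounded above by $S(\cdot,y)$; thus
$$
\phi_-(x) \le S(x,y) + \phi_-(y) \quad \text{for all } x,y \in X.
$$
Taking the minimum over $y \in A$ yields $\phi_- \le \phi_\infty$ on $X$. Conversely, for $x \in A$ the choice $y=x$ in the definition of $\phi_\infty$ gives $\phi_\infty(x) \le S(x,x) + \phi_-(x) = \phi_-(x)$, since every admissible subsolution in the definition of $S(x,x)$ vanishes at $x$ by hypothesis, so $S(x,x)=0$.

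Combining the two identifications, $v = \phi_\infty$ on $A$. Applying Corollary \ref{t:lcp} in both directions to the pair of solutions $v$ and $\phi_\infty$ extends the equality to all of $X$, which is \eqref{e:profile}. The main obstacle is the second identification: it is the extremal characterisation of $S$ together with the subsolution property of $\phi_-$ that pins down the asymptotic profile. The monotonicity argument on $A$, by contrast, is straightforward, as is the concluding comparison step once the two profile functions are shown to agree on $A$.
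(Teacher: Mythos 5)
Your proof is correct, but it takes a genuinely different route from the paper's in its second half. The paper first shows $\phi_\infty\le u_\infty$ exactly as you do (via $\phi_-\le u_\infty$, $\phi_\infty\le\phi_-$ on $A$, and Corollary \ref{t:lcp}), but for the reverse inequality it introduces the auxiliary function $v(x,t):=\inf_{s\ge t}(u(x,s)+cs)$, notes $v(\cdot,0)=\phi_-\le\phi_\infty$, and then invokes the \emph{evolution} comparison principle (Theorem \ref{t:cp}) with $\phi_\infty$ viewed as a time-independent solution of \eqref{e:hj} to get $v(x,t)\le\phi_\infty(x)$, whence $u_\infty\le\phi_\infty$. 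You instead lean on Theorem \ref{t:ltb} in full strength — not just the existence of the limit $u_\infty$ but the fact that it is a \emph{solution} of \eqref{e:lhj} — which lets you identify $u_\infty=\phi_-=\phi_\infty$ on $A$ by monotonicity and the extremal characterisation of $S$, and then run the stationary comparison Corollary \ref{t:lcp} in both directions. This avoids Theorem \ref{t:cp} entirely and is arguably cleaner; the trade-off is that the paper's argument needs only the pointwise existence of $u_\infty$, not its solution property. Two of your intermediate steps are actually more carefully justified than in the paper: you prove $\phi_-\le S(\cdot,y)+\phi_-(y)$ on all of $X$ from the subsolution property of $\phi_--\phi_-(y)$ and $S(x,x)=0$, whereas the paper asserts $\phi_\infty=\phi_-$ on $A$ having only displayed the inequality $\phi_\infty\le\phi_-$ there. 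The one point you share with the paper and gloss over equally is the deduction that $t\mapsto u(x,t)+ct$ is non-increasing for fixed $x\in A$ from the viscosity inequality $\partial_t u+c\le 0$; this is a pointwise-in-$x$ statement and strictly speaking requires a small one-dimensional test-function argument, but the paper's own proof of Theorem \ref{t:ltb} takes it for granted in the same way.
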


\begin{proof}
Denote the left-hand side of \eqref{e:profile} by $u_{\infty}(x)$. 
Since
\begin{equation*}
\phi_{-}(x)\le u(x,t)+ct\quad\text{for all $(x,t)\in X\times[0,\infty)$,}
\end{equation*}
it follows that $\phi_{-}\le u_{\infty}$ in $X$.
It also can be seen that $\phi_{\infty}\le \phi_{-}$ on $\mathcal{A}$ and thus we have $\phi_{\infty}\le u_{\infty}$ on $\mathcal{A}$ from above two inequalities.
This fact and Theorem \ref{t:lcp} leads us to the relationship that $\phi_{\infty}\le u_{\infty}$ on $X$.

We shall show the other inequality.
In order to do, set $v(x,t):=\inf_{s\ge t}(u(x,s)+cs)$ and then we can easily see $v(x,0)=\phi_{-}(x)$.
Since $\phi_{\infty}=\phi_{-}$ on $A$, Corollary \ref{t:lcp} implies that $\phi_{\infty}\ge\phi_{-}=v(\cdot,0)$ on $X$.
It can be considered that $\phi_{\infty}$ as a solution of \eqref{e:hj} with $\phi_{\infty}\ge u_{0}$ and thus by Theorem \ref{t:cp} we obtain $v(x,t)\le \phi_{\infty}(x)$ for all $x\in X$ and $t\ge0$.
Sending $t\to\infty$ yields $u_{\infty}\le\phi_{\infty}$ on $X$.
\end{proof}

% \bibliographystyle{amsplain}
% \bibliography{references}

\providecommand{\bysame}{\leavevmode\hbox to3em{\hrulefill}\thinspace}
\providecommand{\MR}{\relax\ifhmode\unskip\space\fi MR }
% \MRhref is called by the amsart/book/proc definition of \MR.
\providecommand{\MRhref}[2]{%
  \href{http://www.ams.org/mathscinet-getitem?mr=#1}{#2}
}
\providecommand{\href}[2]{#2}

\end{document}